\numberwithin{equation}{section}
\theoremstyle{plain}
\newtheorem{theorem}{Theorem}[section]
\newtheorem{lemma}[theorem]{Lemma}
\theoremstyle{definition}
\newtheorem{definition}[theorem]{Definition}
\newtheorem{remark}[theorem]{Remark}
\newtheorem{example}[theorem]{Example}
\newcommand{\PWOSC}{{\mathsf{PwoSCO}}}
\newcommand{\PWOSCL}{{\mathsf{PwoSCO_\leq}}}
\newcommand{\FSUPH}{{\mbox{$F\kern -.85ex-\kern -.35ex\mathbb{S}$}}}
\newcommand{\FSUPL}{{\mbox{$F\kern -.85ex-\kern -.35ex\mathbb{S}_\leq$}}}
\newcommand{\smFSUPL}{{{F\kern -.35ex-\kern -.08ex\mathbb{S}_\leq}}}
\newcommand{\FINFH}{{\mbox{$F\kern -.85ex-\kern -.35ex\mathbb{M}$}}}
\newcommand{\FINFL}{{\mbox{$F\kern -.85ex-\kern -.35ex\mathbb{M}_\leq$}}}
\newcommand{\EL}{\mathcal{E}_\leq}
\newcommand{\E}{\mathcal{E}}
\newcommand{\op}{^{\mathrm{op}}}
\newcommand{\uv}[1]{``{#1}"}
\newcommand{\pwo}{\mathsf{pwo}}
\newcommand{\pwos}{\mathsf{pwos}}
\begin{document}


\title[Another look]{Another look on tense and related operators}



\author[M.~Botur]{Michal~Botur}
\address{Department of Algebra and Geometry \\
	Faculty of Science\\
Palack\'y University Olomouc\\
17.\ listopadu 12 \\
771 46 Olomouc \\
Czech Republic}
\email{michal.botur@upol.cz}

\corrauthor[J.~Paseka]{Jan~Paseka}
\address{Department of Mathematics and Statistics \\Faculty of Science\\
	Masaryk University\\
	Kotl\'a\v rsk\'a 2 \\
	611 37 Brno \\
	Czech Republic}
\email{paseka@math.muni.cz}

\author[R.~Smolka]{Richard~Smolka}
\address{Department of Mathematics and Statistics \\Faculty of Science\\
	Masaryk University\\
	Kotl\'a\v rsk\'a 2 \\
	611 37 Brno \\
	Czech Republic}
\email{394121@mail.muni.cz}

\thanks{The research of the first author was supported by the IGA under
grant no.~P\v rF~2021 030. The second authors acknowledges the support by the Austrian Science Fund (FWF): project I 4579-N and the Czech Science Foundation (GA\v CR): project 20-09869L, entitled ``The many facets of orthomodularity''.
Support of the research of the third author by 
the project  ``New approaches to aggregation operators in analysis and processing
of data'', Nr.~18-06915S by Czech Grant Agency (GA\v{C}R) is gratefully acknowledged.}


\subjclass{primary 08A72, 06F99; secondary 18B35}

\keywords{Tense operator, Sup-semilattice, Frame, Poset, Nucleus, Prenucleus}

\begin{abstract}
	Motivated by the classical work of Halmos on functional mo\-nadic Boolean algebras we derive 
	three basic sup-semilattice constructions, among other things the so-called powersets and powerset operators. 
	Such constructions are extremely useful and can be found in 
	almost all branches of modern mathematics, including algebra, logic and topology. 
	Our three constructions give rise to four covariant and two contravariant functors and constitute 
	three adjoint situations we illustrate in simple examples. 
\end{abstract}

\maketitle


\section{Introduction}\label{sec:intro} 

The paper considers certain sup-semilattice constructions  which encompasses a range of known constructions, 
among other things the so-called powersets and powerset operators. 
Such constructions are extremely useful and can be found in 
almost all branches of modern mathematics, including algebra, logic, and topology. 

Apart from classical sources of applications  for powersets and powerset operators coming e.g. from 
representation theorems for distributive lattices, Boolean algebras and Boolean algebras with operators 
(see \cite{Stone1,Stone2}, \cite{Priestley} and \cite{Halmos}) we have to mention the work of Zadeh \cite{Zadeh} 
who deﬁned $I^X$ as a new powerset object instead of ${\mathcal P}(X)$ (here $I=[0,1]$ is the 
real unit interval and $X$ a set) and introduced, for every map $f\colon X\to Y$ between sets $X$ and $Y$, 
new powerset operators $f^{\leftarrow}\colon  I^X \to  I^Y$ and $f^{\rightarrow}\colon  I^Y \to  I^X$,
such that for $a \in I^X, b \in I^Y, y \in Y,$
$$
f^{\leftarrow}(a)(y) = \bigvee\{ a(x) \mid f(x)=y, x\in X\}\quad \text{and}\quad
f^{\rightarrow}(b) = b\circ f. 
$$

This approach was expanded and investigated from different angles of view and 
further generalisations followed, see, e.g., \cite{DeMitri,Gerla,Nguyen,Yager}.
For illustrative examples of possible applications see, e.g., the introductory part of the paper 
\cite{Solovyov}. In this work, we concentrate on representation and approximation of 
sup-semilattices with unary operations motivated by the work of Halmos on functional monadic Boolean algebras 
\cite{Halmos}.

First, we derive three basic constructions, namely we construct 

\begin{enumerate}[{(i)}]
	\item  a sup-semilattice ${\mathbf L}^{\mathbf J}$ with a unary operation 
	$F$ (called shortly an {\em $F$-sup-semilattice}) from a  sup-semilattice ${\mathbf L}$
	and a relation ${\mathbf J}$ (called a {\em frame}), 
	
	\item a  sup-semilattice ${\mathbf J}\otimes {\mathbf H}$ from a frame ${\mathbf J}$ and 
	an {$F$-sup-semilattice} ${\mathbf H}$, and  
	
	\item a frame ${\mathbf J}[{\mathbf H}, {\mathbf L}]$ from an {$F$-sup-semilattice} ${\mathbf H}$ 
	and a  sup-semilattice ${\mathbf L}$. 
\end{enumerate}

Second, these constructions give rise to four covariant and two contravariant functors. In other words, let 
$\mathbb S$, $\FSUPL$ and $\mathbb J$ denote the categories of   sup-semilattices, 
$F$-sup-semilattices and frames, respectively. We show that, for arbitrary sup-semilattice ${\mathbf L}$, 
frame ${\mathbf J}$ and {$F$-sup-semilattice} ${\mathbf H}$,

\begin{enumerate}[{(1)}]
	\item ${-}^{\mathbf J}\colon {\mathbb S}\to \FSUPL$, ${-}\otimes {\mathbf H}\colon {\mathbb J}\to \mathbb S$,  
	${\mathbf J}\otimes {-}\colon {\FSUPL}\to \mathbb S$, 
	${\mathbf J}[\mathbf H, {-}]\colon {\mathbb S}\to \mathbb J$ are covariant functors, and 
	
	\item ${\mathbf L}^{-}\colon {\mathbb J}\to \FSUPL$ and 
	${\mathbf J}[{-}, \mathbf L, ]\colon {\FSUPL}\to \mathbb J$ are contravariant functors.
\end{enumerate}

Finally, we obtain three adjoint situations 

\begin{enumerate}[{(I)}]
	\item $(\eta, \varepsilon)\colon (\mathbf  J\otimes -)\dashv (-^{\mathbf  J})\colon %
	\mathbb S \to \FSUPL$, 
	\item $(\varphi, \psi)\colon (-\otimes \mathbf  H)\dashv {\mathbf  J}[\mathbf  H,-])\colon %
	\mathbb S \to \mathbb J$, and 
	\item $(\nu, \mu)\colon {\mathbf  J}[-, \mathbf  L])\dashv {\mathbf  L}^{-}\colon %
	\mathbb J \to {\FSUPL}^{op}$. 
\end{enumerate}

This new approach presented inherits the approach by Halmos. Namely, if we fix a sup-semilattice  
${\mathbf L}$ such that we can guarantee that $\mu_{\mathbf H}$ will be an embedding of 
{$F$-sup-semilattices} for all ${\mathbf H}\in \FSUPL$ then we obtain a variant of the classical statement 
that every monadic Boolean algebra is isomorphic to a functional monadic Boolean algebra 
\cite[Theorem 12]{HalmosP}.

The paper is structured as follows. 
After this introduction, in Section \ref{Preliminaries} we provide some notions and notations that will be used in the paper. 
We also describe in  a detail the factorization process in $\mathbb S$ and $\FSUPL$. 
In Section \ref{constructions} we provide the background on our three basic constructions 
${(-)}^{(-)}\colon {\mathbb S}\times {\mathbb J}\to \FSUPL$, 
${(-)}\otimes {(-)}\colon {\mathbb J}\times \FSUPL\to {\mathbb S}$ and 
${{\mathbf J}[(-)},{(-)}]\colon \FSUPL\times {\mathbb S}\to {\mathbb J}$. 
We present the induced adjoint situations $(\eta, \varepsilon), (\varphi, \psi)$  and 
$(\nu, \mu)$ in Section \ref{situations}. To illustrate 
these adjoint situations we give in Section \ref{approaches} three examples.  
We then conclude in Section \ref{Conclusions}.

For notions and concepts concerned, but not explained, please refer to \cite{Joy of cats},
\cite{KP} and \cite{ZL3}. The reader should be aware
that although we tried to make the paper as much self-contained as possible, the
lack of space still compelled us to leave some details for his/her own perusal. 

\section{Preliminaries}\label{Preliminaries}

In this section we firstly recall several known but useful concepts.

\subsection{Notation}
We use the category-theoretic notation for composition of maps, that is, 
for maps $f\colon A\longrightarrow B$ and $g\colon B\longrightarrow C$
we denote their composition by $g\circ f\colon A\longrightarrow C$, so that
$(g\circ f)(a) = g(f(a))$ for all $a\in A$. The set of all
maps from the $A$ to $B$ we denote by the usual $B^A$. For a map
$f\colon A\longrightarrow B$ and a set $I$ we write
$f^I\colon A^I\to B^I$ for the map defined by $f^I(x)(i)=f(x(i))$.

As usual, an order-preserving mapping on a poset is called an \textit{operator}.  We say that it is a 
\textit{preclosure operator} if it is increasing. A preclosure operator is called 
a \textit{closure operator} if it is idempotent. 

A poset $(A,\leq_A)$ equipped with an operator $F_A$
on $A$ is abbreviated by a $\pwo$, denoted by $(A,\leqslant_A,F_A)$. 
Note that a $\pwo$ is an example of an ordered algebra \cite{ZL3}.

A \textit{homomorphism} of $\pwos$ $\mathcal {A}=(A,\leq_A,F_A)$ and
$\mathcal {B}=(B,\leq_B,F_B)$ is an order-preserving mapping $f\colon A\rightarrow B$
which satisfies $F_B(f(a))= f(F_A(a))$ for any $a\in A$. We denote by $\PWOSC$ the
category of $\pwos$ with homomorphisms.

A monotone mapping $f:A\rightarrow B$ is called a \textit{lax morphism} if
\[
F_B(f(a))\leqslant f(F_A(a))
\]
for any $a\in A$. The category of $\pwos$ with lax morphisms as morphisms is denoted
by $\PWOSCL$. Clearly every homomorphism is a lax morphism, so $\PWOSC$ is a subcategory
of $\PWOSCL$ which is not necessarily full.

Recall that an {\em order-embedding} from a poset $(A,\leq_A)$ to a poset
$(B,\leq_B)$ is a mapping $h\colon A\rightarrow B$ such that $a\leq_A a^{'}$ iff
$h(a)\leq_B h(a^{'})$, for all $a,a^{'}\in A$. Every order embedding is necessarily
an injective mapping. We denote by $\E$ the class  
of order-embeddings that are homomorphisms. We also denote by 
$\EL$ the class of lax morphisms 
$h\colon (A,\leqslant_A,F_A)\rightarrow(B,\leqslant_B,F_B)$
of $\pwos$ that are order-embeddings which satisfy the following condition: 
$\forall a,a'\in A,$
\begin{equation}\label{eq5}
	F_B(h(a))\leqslant f(a')\Longrightarrow F_A(a)\leqslant a'.
\end{equation}

Evidently, $\EL$ is closed under composition of morphisms and $\E\subseteq \EL$.

\begin{definition} \em  We denote by $\mathbb{S}$ be a category where the objects are sup-semilattices 
	${\mathbf G}=(G,\bigvee)$ (posets which have all joins) and morphisms  are mappings between 
	them preserving arbirary joins.
\end{definition}

$\mathbb{S}$ is given by an algebraic theory, so it is an equationally presented category. It is 
a monadic category (over the category of sets), because it has free objects.  $\mathbb{S}$ is also 
a monoidal category;  even a star-autonomous category. The dual of a sup-semilattice is its opposite poset, 
which is also a sup-semilattice since every sup-semilattice has also arbitrary meets. 
$\mathbb{S}$ is  both a complete and cocomplete category. The categorical product $\Pi_{i \in I} G_i$ coincides with 
both the cartesian product and the categorical sum. 
For a detailed account on  $\mathbb{S}$  see \cite{joyal}.

\begin{definition}  \em 
	We say that a pair $({\mathbf G},F)$ is an {\em $F$-sup-semilattice} if ${\mathbf G}$ is a  sup-semilattice and $F$ is 
	a map $F\colon{}G\longrightarrow\ G$ satisfying $F(\bigvee X)=\bigvee F(X)$ for any $X\subseteq G$. 
	A {\em homorphism of $F$-semi\-lattices} is a morphism $f\colon {\mathbf G}_1\longrightarrow\ {\mathbf G}_2$  in  $\mathbb{S}$ 
	which satisfies $F_2(f(x))=f(F_1(x))$ for any $x\in G_1$.
\end{definition}

Note that any $F$-sup-semilattice is a $\pwo$ and we may identify $({\mathbf G},\mbox{\rm id}_{G})$ 
with ${\mathbf G}$ for every sup-semilattice ${\mathbf G}$. A {\em lax morphism of $F$-sup-semi\-lattices} is 
a morphism $f:\mathbf G_1\longrightarrow\ \mathbf G_2$  in  $\mathbb{S}$ which is also a lax morphism in 
$\PWOSCL$. 

It is transparent that the class of all  $F$-sup-semilattices and all homomorphisms between them forms a category. 
We denote  this category as $\FSUPH$. Similarly, we represent by $\FSUPL$ 
the category of $F$-sup-semilattices and   lax morphisms between them. 
Then  $\FSUPH$ is a subcategory of $\FSUPL$, 
$\FSUPL$ is a subcategory of $\PWOSCL$ and 
$\FSUPH$ is a subcategory of $\PWOSC$.

Recall also that $F$-sup-semilattices  
are sup-algebras (see \cite{ZL3}).

\begin{remark}\label{modules}
	For any sup-semilattice $\mathbf G$, we will denote by ${\mathcal Q}(\mathbf G)$ the
	sup-semilattice of all
	sup-semilattice endomorphisms $F\colon \mathbf G\to \mathbf G$
	(with the pointwise ordering $F_1\leq F_2$ iff
	$F_1(x)\leq F_2(x)$ for all $x\in G$). Hence there is a one-to-one correspondence (for a fixed $\mathbf G$) between 
	elements of ${\mathcal Q}(\mathbf G)$ and $F$-sup-semilattices over  $\mathbf G$. 
	
	The sup-semilattice ${\mathcal Q}(\mathbf G)$ can be described by means
	of a tensor product of sup-semilattices
	$${\mathcal Q}(\mathbf G)\cong (\mathbf G\otimes \mathbf G{\op}){\op}$$
	\noindent
	(see \cite{joyal}). This isomorphism is given by
	
	$$F\mapsto\bigvee_{F(t)\leq s}t\otimes s.$$
\end{remark}

Similarly as for sup-semilattices, we can introduce the category $\mathbb M$ of inf-semilattices and 
infimum preserving mappings between them,  the category 
$\FINFH$ of $F$-inf-semilattices and all homomorphisms between them, and the 
category $\FINFL$ of $F$-inf-semilattices and all lax morphisms between them, respectively. 
Then $\mathbb S^{op}$ and $\mathbb M$, $\FSUPH^{op}$  and $\FINFH$,  and $\FSUPL^{op}$ and $\FINFL$ 
are isomorphic, respectively. The isomorphism is given by the identity on objects 
( since every sup-semilattice has arbitrary meets) and prescribing to a sup-preserving 
mapping its right adjoint on morphisms and operators, respectively. 

\subsection{Quotients in $\mathbb S$ and $\FSUPL$}

\begin{definition}  \em  Let $\mathbf G$ be a sup-semilattice. A { \em  congruence 
		on the  sup-semilaticce}  $\mathbf G$ is 
	an equivalence relation $\theta$ on $G$ satisfying $S=\{(x_i, y_i)\mid i\in I\}\subseteq \theta$ implies  
	$(\bigvee_{i\in I}x_i)\theta (\bigvee_{i\in I}y_i)$. Let us denote the set of all congruences on $\mathbf G$ as 
	$\mathrm{Con}\, \mathbf G$.
\end{definition}

If $G$ is a sup-semilattice and $\theta$ a sup-semilattice congruence on $G$, the factor set
$G/\theta$ is a sup-semilattice again and the projection $\pi\colon G\to  G/\theta$ is therefore a sup-semilattice morphism.
Recall that if $\theta _i\in \mathrm{Con}\, \mathbf G$ for all $i\in I$, then also 
$\bigcap _i \theta _i \in \mathrm{Con}\, \mathbf G$.\\

An { \em  $F$-congruence on the $F$-sup-semilattice}  $({\mathbf G},F)$ is 
a congruence $\theta$ on $\mathbf G$ satisfying $a\theta b$ implies 
$F(a)\theta F(b)$ for all $a, b\in G$. Note that if $F=\mbox{\rm id}_G$ then any congruence  on $\mathbf G$ is 
an  $F$-congruence on  $({\mathbf G},\mbox{\rm id}_G)$.

For a $\pwo$ $\mathcal {A}=(A,\leq_A,F_A)$, a preclosure operator $j$ on
$A$ is called a \textit{prenucleus} if $j$ is a lax morphism (here one has to recall 
Banaschewski's \cite{ban} theory of prenuclei). We put
$A_j=\{a\in A\mid j(a)=a\}$, $\leq_{A_j}=\leq_A\cap\, \left(A_j\times A_j\right)$ and
$F_{A_j}=j\circ F_A$. A prenucleus is said to be a \textit{nucleus} if it is a closure operator.

\begin{lemma}\label{lemnucleus}
	Let $\mathcal {A}=(A,\leq_A,F_A)$ be  a $\pwo$ and $j$ a nucleus on $\mathcal {A}$.
	Then $\mathcal {A}_j=(A_j,\leq_{A_j},F_{A_j})$ is  a $\pwo$, 
	the inclusion map $i_{A_j}\colon {A_j} \to A$ is a lax morphism such that  
	$i_{A_j}\in \EL$ and 
	the surjection $j\colon A\to {A_j}$ is a homomorphisms of $\pwos$. Moreover, 
	\begin{enumerate}
		\item if $j\colon A\to {A}$ is  a homomorphisms of $\pwos$ then $i_{A_j}\colon {A_j} \to A$  
		is  a homomorphisms of $\pwos$, 
		\item if  $\mathcal {A}$
		is a $F$-sup-semilattice then  $\mathcal {A}_j$ is a  $F$-sup-semilattice.
	\end{enumerate}
\end{lemma}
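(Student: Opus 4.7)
The plan is to verify each claim by unpacking the definitions of $F_{A_j}=j\circ F_A$, $A_j=\{a\in A\mid j(a)=a\}$, and to exploit three features of a nucleus $j$: it is increasing ($a\leq_A j(a)$), monotone, idempotent, and a lax morphism ($F_A\circ j\leq_A j\circ F_A$).

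First I would check that $\mathcal{A}_j$ is a $\pwo$. Since both $j$ and $F_A$ are order-preserving, so is $F_{A_j}$; idempotency of $j$ ensures $F_{A_j}(a)=j(F_A(a))\in A_j$, so $F_{A_j}$ is indeed an operator on $A_j$. Next, for $i_{A_j}$: the lax-morphism inequality $F_A(i_{A_j}(a))=F_A(a)\leq_A j(F_A(a))=i_{A_j}(F_{A_j}(a))$ is exactly the statement that $j$ is increasing. The map $i_{A_j}$ is an order-embedding by construction of $\leq_{A_j}$, and for condition~(\ref{eq5}), if $F_A(a)\leq_A a'$ with $a,a'\in A_j$, monotonicity of $j$ and $j(a')=a'$ give $j(F_A(a))\leq_A j(a')=a'$, i.e.\ $F_{A_j}(a)\leq_{A_j}a'$. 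Hence $i_{A_j}\in\EL$.

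Showing $j\colon A\to A_j$ is a strict homomorphism is the key computational step: one must verify $j(F_A(a))=F_{A_j}(j(a))=j(F_A(j(a)))$. The inequality $\geq$ comes from $j$ being a lax morphism followed by idempotency: $j(F_A(j(a)))\leq_A j(j(F_A(a)))=j(F_A(a))$. The inequality $\leq$ uses that $j$ is increasing plus monotonicity of $F_A$ and $j$: $a\leq_A j(a)$ yields $j(F_A(a))\leq_A j(F_A(j(a)))$. For the additional claim~(1), if $j$ is itself a $\pwo$-homomorphism then for $a\in A_j$ we have $F_A(a)=F_A(j(a))=j(F_A(a))$, so $F_A(a)\in A_j$ and $i_{A_j}(F_{A_j}(a))=j(F_A(a))=F_A(a)=F_A(i_{A_j}(a))$.

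For claim~(2), suppose $\mathcal{A}$ is an $F$-sup-semilattice. A standard nucleus argument shows $A_j$ inherits joins via $\bigvee_{A_j}X=j(\bigvee_A X)$ for $X\subseteq A_j$. To show $F_{A_j}$ preserves them, compute
\[
F_{A_j}\!\Bigl(\bigvee_{A_j}X\Bigr)=j\!\bigl(F_A(j(\textstyle\bigvee_A X))\bigr)=j\!\bigl(F_A(\textstyle\bigvee_A X)\bigr)=j\!\Bigl(\bigvee_A F_A(X)\Bigr),
\]
where the middle equality is the identity $j\circ F_A=j\circ F_A\circ j$ proved in the previous paragraph and the last uses that $F_A$ preserves joins. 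A two-sided squeeze between $\bigvee_A F_A(X)$ and $\bigvee_A\{j(F_A(x))\mid x\in X\}$ (using $F_A(x)\leq_A j(F_A(x))\leq_A j(\bigvee_A F_A(X))$) shows this equals $j(\bigvee_A\{j(F_A(x))\})=\bigvee_{A_j}F_{A_j}(X)$, as required. The main obstacle is precisely this last step: coordinating the two different join operations in $A$ and $A_j$ and showing the identity $j\circ F_A=j\circ F_A\circ j$ is exactly what makes $F_{A_j}$ well-behaved with respect to the $A_j$-joins.
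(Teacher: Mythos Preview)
Your proposal is correct and follows essentially the same line as the paper's proof: the same inequalities are used in the same places, with only cosmetic differences in organization. The one small divergence is that for $i_{A_j}$ being a lax morphism you invoke the increasing property $F_A(a)\leq_A j(F_A(a))$ directly, whereas the paper first writes $a=j(a)$ and appeals to the lax condition $F_A(j(a))\leq_A j(F_A(a))$; both are valid, and your version is arguably the cleaner of the two.
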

\begin{proof} Clearly, $\left(A_j,\leq_{A_j}\right)$ is a poset and $F_{A_j}$ is increasing. 
	
	We now verify that  $i_{A_j}\colon {A_j} \to A$  is a lax morphism. Let $a=j(a)\in A_j$. Then 
	$$
	F_A(i_{A_j}(a))=F_A(j(a))\leq j(F_A(a))=i_{A_j}(F_{A_j}(a)).
	$$
	Clearly, $i_{A_j}$ is an order-embedding. 
	Now, let $a,  a'\in A_j\subseteq A$ and assume that $F_A(a)=F_A(i_{A_j}(a))\leq i_{A_j}(a')=a'$. 
	We compute: 
	$$
	F_{A_j}(a)=j(F_A(a))\leq j(a')=a'.
	$$
	
	Let us show that  $j\colon A\to {A_j}$ is a homomorphisms of $\pwos$. Let $a\in A$. Then
	$$
	j(F_A(a))\leqslant j(F_A(j(a)))=F_{A_j}(j(a))\leqslant j(j(F_A(a)))=j(F_A(a)).
	$$
	
	Suppose now that $j\colon A\to {A}$ is  a homomorphisms of $\pwos$ and 
	let $a=j(a)\in A_j$. Then 
	$$
	i_{A_j}(F_{A_j}(a))=j(F_A(a))=F_A(j(a))=F_A(i_{A_j}(a)).
	$$

	Assume that  $\mathcal {A}$ is a $F$-sup-semilattice. Then 
	$\left(A_j,\leqslant_{A_j}\right)$ is
	a sup-semilattice such that $\bigvee_{\mathcal {A}_j}=j\circ \bigvee_{\mathcal {A}}$.
	Let  $M\subseteq A_j$. Then
	$$
	\begin{array}{@{\,}r@{\,}c@{\,}l}
		F_{A_j}\left(\bigvee_{\mathcal {A}_j} M\right)&=&%
		\left(j\circ F_A\circ j\circ \bigvee_{\mathcal {A}}\right)(M)%
		\leq \left(j\circ j\circ F_A\circ \bigvee_{\mathcal {A}}\right)(M)\\
		&=&%
		\left(j\circ F_A\circ \bigvee_{\mathcal {A}}\right)(M)=
		j\left(\bigvee_{\mathcal {A}} \{ F_A(m)\mid {m\in M}\}\right)\\
		&\leq& %
		j\left(\bigvee_{\mathcal {A}} \{ j(F_A(m))\mid {m\in M}\}\right)=%
		\bigvee_{\mathcal {A}_j} \{F_{A_j}(m)\mid {m\in M}\}\\%
		&\leq&F_{A_j}\left(\bigvee_{\mathcal {A}_j} M\right).
	\end{array}
	$$
\end{proof}

Similarly as for sup-semilattices there is a one-to-one correspondence 
between nuclei and $F$-congruences on $F$-sup-semilattices, given by 
$$
\begin{array}{r c l}
	j&\mapsto&\theta_j; \ \text{here}\ %
	\theta_j=\{(a, b)\in G\times G\mid j(a)=j(b)\}\\[0.2cm]
	\theta &\mapsto& j_{\theta};\ \text{here}\ %
	j_{\theta}(x)=\bigvee\{y\in G\mid x\theta y\}.\\
\end{array}
$$
We omit the straightforward verification of the above fact.

\begin{lemma}\label{lemprenucleus}
	Let  $({\mathbf G},F)$ be  $F$-sup-semilattice   and  $j$ a prenucleus on $({\mathbf G},F)$. Then 
	the poset $\left(G_j,\leq_{G_j}\right)$ is a closure system on $\mathbf G$, 
	and the associated closure operator $\mbox{n}(j)$ is given by 
	$$
	\mbox{\rm n}(j)(a)=\bigwedge\{x\in G_j\mid a\leq x\}.
	$$
	Moreover, $\mbox{\rm n}(j)$ is  a nucleus on $({\mathbf G},F)$. 
\end{lemma}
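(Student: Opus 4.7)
The plan is to split the argument into two halves: first, show that $G_j$ is a closure system in $\mathbf G$, which already gives the stated formula for $\mbox{\rm n}(j)$ via the standard closure-system/closure-operator correspondence; second, upgrade the resulting closure operator to a lax morphism by a transfinite iteration of $j$.

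For the closure-system part, I would fix $\{x_i\}_{i\in I}\subseteq G_j$ and observe that monotonicity of $j$ applied to $\bigwedge_i x_i\leq x_i$ gives $j(\bigwedge_i x_i)\leq j(x_i)=x_i$ for each $i$, hence $j(\bigwedge_i x_i)\leq \bigwedge_i x_i$; the reverse inequality is just $j$ being increasing. So $G_j$ is closed under the arbitrary meets that exist in $\mathbf G$ (a sup-semilattice has all meets), and $\mbox{\rm n}(j)(a)=\bigwedge\{x\in G_j\mid a\leq x\}$ is then the associated closure operator in the usual sense. This disposes of the first assertion and the closure-operator half of the second.

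For the nucleus part, only the lax inequality $F(\mbox{\rm n}(j)(a))\leq \mbox{\rm n}(j)(F(a))$ remains. I would iterate $j$ transfinitely on each $a$: set $j^0(a)=a$, $j^{\alpha+1}(a)=j(j^\alpha(a))$, and $j^\lambda(a)=\bigvee_{\alpha<\lambda}j^\alpha(a)$ at limits. Because $j$ is increasing, the chain is monotone and must stabilize at some ordinal $\alpha_0$ with $j^{\alpha_0}(a)\in G_j$; a routine induction on $\alpha$ (using monotonicity of $j$) shows $j^\alpha(a)\leq x$ for every $x\in G_j$ with $a\leq x$, hence $j^{\alpha_0}(a)=\mbox{\rm n}(j)(a)$. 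A second transfinite induction then yields $F(j^\alpha(a))\leq j^\alpha(F(a))$ for every $\alpha$. At successor stages one chains the lax property of the prenucleus $j$ with the monotonicity of $j$,
\[
F(j^{\alpha+1}(a))=F(j(j^\alpha(a)))\leq j(F(j^\alpha(a)))\leq j(j^\alpha(F(a)))=j^{\alpha+1}(F(a)),
\]
and at limit stages the sup-preservation of $F$ gives
\[
F(j^\lambda(a))=\bigvee_{\alpha<\lambda}F(j^\alpha(a))\leq\bigvee_{\alpha<\lambda}j^\alpha(F(a))=j^\lambda(F(a)).
\]
Evaluating at $\alpha_0$ delivers the required inequality, so $\mbox{\rm n}(j)$ is a nucleus on $(\mathbf G,F)$.

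The only genuinely delicate point is the limit step of this last induction, where both the sup-semilattice structure of $\mathbf G$ and the fact that $F$ distributes over arbitrary joins are indispensable; without either, the lax inequality fails to pass through the joins, and the iteration breaks down before producing a nucleus. In particular, the hypothesis that $j$ is merely a \emph{prenucleus} (not already idempotent) is compensated for exactly by the availability of these arbitrary joins.
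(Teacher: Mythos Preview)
Your proof is correct. The paper argues the nucleus part differently in form, though with the same ingredients. Instead of iterating $j$ transfinitely, it fixes $a$ and considers
\[
E=\{x\in G\mid a\leq x\leq \mbox{\rm n}(j)(a),\ F(x)\leq \mbox{\rm n}(j)(F(a))\},
\]
checks that $a\in E$, that $E$ is closed under $j$ (by the lax inequality for $j$) and under arbitrary joins (by sup-preservation of $F$), and concludes that $t=\bigvee E$ is a $j$-fixed point squeezed between $a$ and $\mbox{\rm n}(j)(a)$, hence equals $\mbox{\rm n}(j)(a)$; membership of $t$ in $E$ then gives $F(\mbox{\rm n}(j)(a))\leq \mbox{\rm n}(j)(F(a))$. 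Your transfinite iteration is the explicit, step-by-step version of the same mechanism: every iterate $j^\alpha(a)$ lies in the paper's $E$, and the stabilized value is precisely $\bigvee E$. The paper's route is ordinal-free and a little shorter; yours makes the approximation of $\mbox{\rm n}(j)$ by iterates of $j$ visible, which is sometimes useful in its own right.

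One small point worth tightening in your write-up: the stabilization ordinal $\alpha_0$ is chosen for $a$, not for $F(a)$, so ``evaluating at $\alpha_0$'' literally gives $F(\mbox{\rm n}(j)(a))\leq j^{\alpha_0}(F(a))$, and you then need the already-proved bound $j^{\alpha_0}(F(a))\leq \mbox{\rm n}(j)(F(a))$ to finish. Alternatively, take $\alpha_0$ large enough (e.g.\ of cofinality exceeding $|G|$) so that the chain has stabilized for every element simultaneously.
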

\begin{proof} Since $j$ is order-preserving and increasing we obtain easily that 
	$\left(G_j,\leq_{G_j}\right)$ is a closure system and  $\mbox{n}(j)$ is a closure operator. 
	It remains to verify that $\mbox{n}(j)$ is a lax morphism. Let $a\in G$. We put 
	$$
	E=\{x\in L\mid a\leq x\leq \mbox{n}(j)(a), F(x)\leq \mbox{n}(j)(F(a))\}.
	$$
	Then $a\in E$, $x\in E$ implies 
	$F(j(x))\leq j(F(x))\leq j(\mbox{n}(j)(F(a))=\mbox{n}(j)(F(a))$ and hence 
	$j(x)\in E$. Also, for any non-empty $X\subseteq E$, we have 
	$$
	F(\bigvee X)= \bigvee\{F(x)\mid x\in X\}\leq \mbox{n}(j)(F(a)), 
	$$
	i.e., $\bigvee X\in E$. Hence $t=\bigvee E\in E$ is the largest element of $E$. Then 
	$j(t)\leq t\leq j(t)$ and $t\in G_j$. Therefore $a\leq t\leq \mbox{n}(j)(a)$ implies 
	$t= \mbox{n}(j)(a)\in E$. This says that $F(\mbox{n}(j)(a))\leq \mbox{n}(j)(F(a))$. 
\end{proof}

We will need the following. 

Let  $({\mathbf G},F)$ be  $F$-sup-semilattice   and  $X\subseteq G^{2}$. We put 
$$
\begin{array}{r c l}
	j[X](a)&=&a\vee\bigvee\{c\in G\mid d \leq a, (c,d)\in X\ \text{or}\ (d,c)\in X\}\\
\end{array}
$$

\begin{lemma}\label{lemprenucleus2}
	Let  $({\mathbf G},F)$ be  $F$-sup-semilattice   and  $X\subseteq G^{2}$ such that 
	$(F\times F)(X)\subseteq X$.  Then the mapping 
	$j[X]\colon G \to G$ is a prenucleus on $({\mathbf G},F)$.  Moreover, 
	for every $F$-sup-semilattice   $({\mathbf H},F_{{\mathbf H}})$ and every lax morphism 
	$g\colon G\to H$ of $F$-sup-semilattices such that $g(c)=g(d)$ for all $(c,d)\in X$  there is a unique 
	lax morphism $\overline{g}\colon G_{\mbox{\rm\small n}(j[X])}\to H$ 
	of $F$-sup-semilattices such that $g=\overline{g}\circ \mbox{\rm n}(j[X])$. 
\end{lemma}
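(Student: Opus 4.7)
The plan is to prove the two claims separately. For the prenucleus claim, writing $S_a := \{c \in G \mid \exists d \leq a \text{ with } (c,d)\in X \text{ or } (d,c)\in X\}$, so that $j[X](a) = a \vee \bigvee S_a$, monotonicity of $a \mapsto S_a$ in $a$ makes $j[X]$ order-preserving, and the summand $a$ makes it increasing. For the lax-morphism property I will use that $F$ preserves joins to write $F(j[X](a)) = F(a) \vee \bigvee F(S_a)$. The hypothesis $(F\times F)(X) \subseteq X$ immediately yields $F(S_a) \subseteq S_{F(a)}$, hence $F(j[X](a)) \leq j[X](F(a))$.

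For the universal property, Lemma~\ref{lemprenucleus} produces the nucleus $\mathrm{n}(j[X])$ and Lemma~\ref{lemnucleus} turns $(G_{\mathrm{n}(j[X])}, \leq, F_{G_{\mathrm{n}(j[X])}})$ into an $F$-sup-semilattice. I will take $\overline{g}$ to be the restriction of $g$ to $G_{\mathrm{n}(j[X])}$; uniqueness then follows because $\mathrm{n}(j[X])$ is the identity on $G_{\mathrm{n}(j[X])}$. The crux is the identity $g \circ \mathrm{n}(j[X]) = g$, i.e., $g$ is constant on nucleus-equivalence classes. To prove this, for each $a \in G$ I set $b_a := \bigvee\{x \in G \mid g(x) \leq g(a)\}$. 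Since $g$ preserves joins, $g(b_a) = g(a)$, so $b_a$ is the largest element of $G$ whose $g$-image is bounded by $g(a)$. I will then claim $b_a$ is a fixed point of $j[X]$: for any $c$ with $(c,d) \in X$ or $(d,c) \in X$ and $d \leq b_a$, the hypothesis $g(c) = g(d)$ gives $g(c) \leq g(b_a) = g(a)$, so $c \leq b_a$ by maximality of $b_a$. Hence $j[X](b_a) \leq b_a$, and combined with $a \leq b_a$ this yields $\mathrm{n}(j[X])(a) \leq b_a$ by Lemma~\ref{lemprenucleus}. Therefore $g(\mathrm{n}(j[X])(a)) \leq g(b_a) = g(a)$, and the reverse inequality holds because $\mathrm{n}(j[X])$ is increasing.

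It will then remain to verify that $\overline{g}$ is a lax morphism of $F$-sup-semilattices. Join-preservation will follow from the formula $\bigvee_{G_{\mathrm{n}(j[X])}} = \mathrm{n}(j[X]) \circ \bigvee_{G}$ combined with $g \circ \mathrm{n}(j[X]) = g$; the lax condition will follow analogously from $F_{G_{\mathrm{n}(j[X])}} = \mathrm{n}(j[X]) \circ F$ together with the laxness of $g$. The main obstacle is spotting the element $b_a$ and checking it is a fixed point of $j[X]$ using only the symmetry built into the definition of $j[X]$ and the equalities $g(c) = g(d)$; once this key step is in place, the remaining verifications are routine.
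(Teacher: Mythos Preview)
Your proposal is correct and follows essentially the same route as the paper. The paper's set $E=\{x\in G\mid g(x)\leq g(a),\ a\leq x\}$ with $t=\bigvee E$ coincides with your $b_a$ (the extra constraint $a\leq x$ is redundant since $a$ already lies in your defining set), and both arguments show this element is a $j[X]$-fixed point above $a$ in order to conclude $g(\mathrm{n}(j[X])(a))=g(a)$; your verifications of join-preservation and laxness for $\overline{g}$ via the single identity $g\circ\mathrm{n}(j[X])=g$ are in fact a bit tidier than the paper's, but the substance is the same.
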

\begin{proof} Evidently, $j[X]$ is order-preserving and increasing. Let us show that it is a lax morphism. 
	Let $a\in G$. We compute:
	$$
	\begin{array}{@{}r@{\,\,}c@{\,\,}l}
		F(j[X](a))&=&F(a)\vee\bigvee\{F(c)\in G\mid d \leq a, (c,d)\in X\ \text{or}\ (d,c)\in X\}\\
		&\leq&F(a)\vee\bigvee\{F(c)\in G\mid F(d) \leq F(a), (c,d)\in X\ \text{or}\ (d,c)\in X\}\\
		&\leq&F(a)\vee\bigvee\{u\in G\mid v \leq F(a), (u,v)\in X\ \text{or}\ (v,u)\in X\}\\
		&=&j[X](F(a)).
	\end{array}
	$$
	
	Now, let us put 
	$$
	E=\{x\in G\mid g(x)\leq g(a), a\leq x\}\ \text{and}\ \overline{g}((\mbox{n}(j)(a)))=g(a). 
	$$
	Then $a\in E$. Let $x\in E$. Then $a\leq x\leq j[X](x)$ 
	and  $g(x)\leq g(a)$. We have 
	$$
	\begin{array}{@{}r@{\,\,}c@{\,\,}l}
		g(j[X](x))&=&g(x\vee\bigvee\{c\in G\mid d \leq x, (c,d)\in X\ \text{or}\ (d,c)\in X\})\\
		&=&g(x)\vee\bigvee\{g(c)\in G\mid d \leq x, (c,d)\in X\ \text{or}\ (d,c)\in X\}\\%
		&\leq&g(x)\vee\bigvee\{g(d)\in G\mid g(d) \leq g(x)\}\leq g(x)\leq g(a).\\%
	\end{array}
	$$
	Hence $j[X](x)\in E$. For any non-empty $Y\subseteq E$, we have 
	$$
	g(\bigvee Y)= \bigvee\{g(y)\mid y\in Y\}\leq g(a), 
	$$
	i.e., $\bigvee Y\in E$. Hence $t=\bigvee E\in E$ is the largest element of $E$. Then 
	$j[X)(t)\leq t\leq j[X](t)$ and $t\in G_{j[X]}$. Therefore $a\leq  \mbox{n}(j)(a)\leq t$ implies 
	$g(a)\leq g(\mbox{n}(j)(a))\leq g(t)\leq g(a)$. This says that $\overline{g}$ is correctly 
	defined and $g=\overline{g}\circ \mbox{\rm n}(j[X])$. Let us show that $\overline{g}$ is 
	a  lax morphism of $F$-sup-semilattices. Clearly, $\overline{g}$ is order-preserving. 
	Take $Y\subseteq G_{j[X])}$ and $a\in  G_{j[X])}$. We compute:
	$$
	\overline{g}(\bigvee_{{\mathbf G}_{j[X]}} Y)=g(\bigvee Y)=\bigvee\{g(y)\mid y\in Y\}%
	=\bigvee\{\overline{g}(y)\mid y\in Y\}\leq  \overline{g}(\bigvee_{{\mathbf G}_{j[X]}} Y)
	$$
	and 
	$$
	\begin{array}{@{}r@{\,\,}c@{\,\,}l}
		F_H(\overline{g}(a))&=& F_H({g}(a))\leq g(F(a))=g(F(\mbox{\rm n}(j[X])(a)))\\[0.2cm]
		&\leq& g(\mbox{\rm n}(j[X])(F(a)))=\overline{g}(F_{\mbox{\rm\small n}(j[X])}(a)).
	\end{array}
	$$
\end{proof}


\section{Basic constructions}\label{constructions}

\begin{definition} \em  A {\em frame} $\mathbf J$ is a pair $(T,S)$, where $T$ is a set 
	and $S$ is a binary relation on $T$, i.e., $S\subseteq T^2$. A homomorphism $f$ between 
	frames $\mathbf J_1$  and $\mathbf J_2$  is a map $f:T_1\longrightarrow\ T_2$ such that 
	whenever the pair $(i,j)\in S_1$ then $(f(i),f(j))\in S_2$.\\
	
	Frames with frame homomorphisms obviously form a category, which we denote as  $\mathbb{J}$.
\end{definition}

Note that one of our motivations for this work comes from the modal-logic approach introduced by 
Arthur Prior in 1950's (see \cite{Prior57}). Prior's tense logic was subsequently developed 
by logicians and computer scientists. The relation $S$ is considered  as 
a {\em time preference}, i.e., 
$xSy$  expresses {\uv{$x$ \em  is before $y$}} 
and {\uv{\em $y$ is after $x$}}.

\begin{definition}\label{means} \em Let $\mathbf L=(L,\bigvee)$ be a sup-semilattice 
	and ${\mathbf J}=(T,S)$ a frame. Let us define 
	an $F$-sup-semilattice ${\mathbf L}^{\mathbf J}$ as ${\mathbf L}^{\mathbf J}=(\mathbf L^T,F^{\mathbf J})$, 
	where $$(F^{\mathbf J}(x))(i)=\bigvee \{x(k) \mid {(i,k)\in S}\}$$ 
	for all $x\in L^T$. $F^{\mathbf J}$ will be called an 
	{\em operator on} ${\mathbf L}^{T}$ 
	{\em constructed by means of the frame} ${\mathbf J}$.
\end{definition}

It is evident that Definition \ref{means} is correct. Namely, $F^{\mathbf J}\in {\mathcal Q}(\mathbf L^{T})$ since 
$$\begin{array}{r@{\,\,}c@{\,\,}l}
	(F^{\mathbf J}(\bigvee X))(i)&=&\bigvee \{(\bigvee X)(k) \mid {(i,k)\in S}\}%
	=\bigvee \{\overline{x}(k) \mid {(i,k)\in S}, \overline{x}\in X\}\\[0.2cm]
	&=&\bigvee \{\bigvee \{\overline{x}(k) \mid {(i,k)\in S}\}\mid \overline{x}\in X\}\\[0.2cm]
	&=&\bigvee \{(F^{\mathbf J}(\overline{x}))(i)\mid \overline{x}\in X\}=%
	\left(\bigvee \{F^{\mathbf J}(\overline{x})\mid \overline{x}\in X\}\right)(i)
\end{array}$$ 
for all $i\in T$ and all $X\subseteq L^{T}$. 

Recall that  $F^{\mathbf J}(x)$ can be interpreted as ``a case in future of an element $x$" 
(see \cite[Theorem 6.2]{chapa}). 

\begin{theorem}\label{funSSF} Let $\mathbf L_1$ and $\mathbf  L_2$ be sup-semilattices, 
	let $f\colon \mathbf L_1\longrightarrow\ \mathbf L_2$ be a homomorphism and let $\mathbf J=(T,S)$ be a frame. Then there exists a homomorphism $f^{\mathbf J}:\mathbf L_1^{\mathbf J}\longrightarrow\ \mathbf L_2^{\mathbf J}$ 
	in $\mathbb S_F$ such that, for every $x\in L_1^T$ and every $i\in T$, it holds $(f^{\mathbf J}(x))(i)=f(x(i))$. 
	Moreover, $-^{\mathbf J}$ is a functor from $\mathbb S$ to $\mathbb S_F$.
\end{theorem}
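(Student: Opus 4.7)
The plan is to define $f^{\mathbf J}$ by the prescribed pointwise formula, verify three things in turn (join preservation, compatibility with the operators $F^{\mathbf J}_i$, and functoriality), and observe that each reduces to a one-line computation that exploits the fact that $f$ is itself a sup-semilattice homomorphism.

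First I would simply set $(f^{\mathbf J}(x))(i) := f(x(i))$ for $x\in L_1^T$ and $i\in T$; this is the natural map $f^T$ from the notation section. Join preservation in $\mathbf L_2^{\mathbf J} = (\mathbf L_2^T, F^{\mathbf J}_2)$ is computed componentwise: for $X\subseteq L_1^T$ and $i\in T$,
$$
\bigl(f^{\mathbf J}(\bigvee X)\bigr)(i) = f\bigl((\bigvee X)(i)\bigr) = f\bigl(\bigvee\{x(i)\mid x\in X\}\bigr) = \bigvee\{f(x(i))\mid x\in X\},
$$
where the last equality uses that $f$ preserves arbitrary joins; the right-hand side equals $(\bigvee f^{\mathbf J}(X))(i)$ by the pointwise structure of $L_2^T$.

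Next I would check that $f^{\mathbf J}$ intertwines the two operators. Fix $x\in L_1^T$ and $i\in T$. Using Definition \ref{means} together with the join-preservation of $f$,
$$
\bigl(f^{\mathbf J}(F^{\mathbf J}_1(x))\bigr)(i) = f\bigl(\bigvee\{x(k)\mid (i,k)\in S\}\bigr) = \bigvee\{f(x(k))\mid (i,k)\in S\} = \bigl(F^{\mathbf J}_2(f^{\mathbf J}(x))\bigr)(i),
$$
so $f^{\mathbf J}$ is a homomorphism of $F$-sup-semilattices. No prenucleus or $\EL$ machinery is needed here since $f$ is strict.

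Finally, functoriality is immediate from the pointwise definition: $(\mathrm{id}_{\mathbf L})^{\mathbf J}$ is the identity on $L^T$, and for composable homomorphisms $f\colon \mathbf L_1\to\mathbf L_2$, $g\colon \mathbf L_2\to\mathbf L_3$ one has $((g\circ f)^{\mathbf J}(x))(i) = (g\circ f)(x(i)) = g(f(x(i))) = (g^{\mathbf J}\circ f^{\mathbf J}(x))(i)$, so $(g\circ f)^{\mathbf J} = g^{\mathbf J}\circ f^{\mathbf J}$. There is really no main obstacle; the whole argument is driven by the single observation that the $\bigvee$ in the definition of $F^{\mathbf J}$ commutes with $f$ precisely because $f\in \mathbb S$. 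I would present the three verifications in this order and conclude with the functoriality clause.
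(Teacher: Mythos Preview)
Your argument is correct and follows essentially the same route as the paper: the central computation showing $f^{\mathbf J}\circ F^{\mathbf J}_1 = F^{\mathbf J}_2\circ f^{\mathbf J}$ is identical. The only difference is packaging: where you verify join preservation and functoriality by explicit pointwise computation, the paper simply invokes that $\mathbb S$ has products and that the product construction is functorial (so $f^T$ is automatically an $\mathbb S$-morphism and $(-)^T$ a functor).
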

\begin{proof} Since  $\mathbb S$ has arbitrary products we know that $f^{\mathbf J}$ is a morphism in $\mathbb S$. 
	It remains to check that $f^{\mathbf J}$ is an homomorphism. We compute: 
	$$\begin{array}{r@{\,\,}c@{\,\,}l}
		(F^{\mathbf J}(f^{\mathbf J}(x)))(i)&=&\bigvee _{(i,k)\in S} f^{\mathbf J}(x)(k)=\bigvee _{(i,k)\in S} f(x(k))\\[0.2cm]
		&=&f(\bigvee _{(i,k)\in S} x(k))=f(F^{\mathbf J}(x)(i))=(f^{\mathbf J}(F^{\mathbf J}(x)))(i).
	\end{array}$$ 
	The functoriality of $-^{\mathbf J}$ follows from the functoriality of the product construction in $\mathbb S$. 
\end{proof}

\begin{theorem}\label{funJSF}  Let $\mathbf J_1$ and $\mathbf J_2$ be frames, let 
	$t\colon\mathbf  J_1\longrightarrow\ \mathbf J_2$ be a homomorphism of frames 
	and let $\mathbf L$ be a sup-semilattice. Then there exists a lax morphism 
	${\mathbf L}^t\colon {\mathbf L}^{{\mathbf J}_2}\longrightarrow\ {\mathbf L}^{\mathbf J_1}$  
	of $F$-sup-semilattices such that, for every $x\in L^{T_2}$ and every $i\in T_1$, it holds 
	$({\mathbf L}^t(x))(i)=x(t(i))$. Moreover, ${\mathbf L}^{-}$ is a contravariant functor from $\mathbb{J}$ 
	to $\FSUPL$.
\end{theorem}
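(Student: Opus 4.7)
The plan is to define ${\mathbf L}^t\colon L^{T_2}\to L^{T_1}$ by precomposition with $t$, namely $({\mathbf L}^t(x))(i)=x(t(i))$ for $x\in L^{T_2}$ and $i\in T_1$. This is the formula forced by the statement and is well-defined as a set map independent of the frame structure; the real content is then to verify that it is a lax morphism of $F$-sup-semilattices, and that the assignment $t\mapsto {\mathbf L}^t$ is functorial in a contravariant way.

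First I would check that ${\mathbf L}^t$ is a morphism in $\mathbb{S}$. Since joins in $L^{T_k}$ are computed pointwise, for any $X\subseteq L^{T_2}$ and any $i\in T_1$ one has $({\mathbf L}^t(\bigvee X))(i)=(\bigvee X)(t(i))=\bigvee\{x(t(i))\mid x\in X\}=(\bigvee\{{\mathbf L}^t(x)\mid x\in X\})(i)$, so ${\mathbf L}^t$ preserves arbitrary joins.

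The central step is the lax-morphism inequality $F^{{\mathbf J}_1}({\mathbf L}^t(x))\leq {\mathbf L}^t(F^{{\mathbf J}_2}(x))$. Unwinding both sides at an index $i\in T_1$ gives $(F^{{\mathbf J}_1}({\mathbf L}^t(x)))(i)=\bigvee\{x(t(k))\mid (i,k)\in S_1\}$ and $({\mathbf L}^t(F^{{\mathbf J}_2}(x)))(i)=(F^{{\mathbf J}_2}(x))(t(i))=\bigvee\{x(m)\mid (t(i),m)\in S_2\}$. Because $t$ is a frame homomorphism, $(i,k)\in S_1$ implies $(t(i),t(k))\in S_2$, so the indexing set of the first join sits inside that of the second, and the inequality follows by taking joins. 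This is precisely the point at which equality can fail, since a frame homomorphism need not be full in the sense that every $m$ with $(t(i),m)\in S_2$ arises as $t(k)$ for some $k$ with $(i,k)\in S_1$; this is exactly why ${\mathbf L}^{-}$ genuinely lands in $\FSUPL$ rather than in $\FSUPH$, and is the only real obstacle in the proof.

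Finally, contravariant functoriality is routine. For the identity homomorphism $\mathrm{id}_{\mathbf J}$ one has $({\mathbf L}^{\mathrm{id}_{\mathbf J}}(x))(i)=x(i)$, so ${\mathbf L}^{\mathrm{id}_{\mathbf J}}=\mathrm{id}_{{\mathbf L}^{\mathbf J}}$; and for composable frame homomorphisms $t_1\colon {\mathbf J}_1\to {\mathbf J}_2$ and $t_2\colon {\mathbf J}_2\to {\mathbf J}_3$ we compute $({\mathbf L}^{t_2\circ t_1}(x))(i)=x(t_2(t_1(i)))=({\mathbf L}^{t_2}(x))(t_1(i))=({\mathbf L}^{t_1}({\mathbf L}^{t_2}(x)))(i)$, which gives ${\mathbf L}^{t_2\circ t_1}={\mathbf L}^{t_1}\circ {\mathbf L}^{t_2}$. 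This establishes that ${\mathbf L}^{-}$ is a contravariant functor from $\mathbb{J}$ to $\FSUPL$, and parallels the covariant argument in Theorem~\ref{funSSF}.
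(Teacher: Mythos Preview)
Your proof is correct and follows essentially the same route as the paper: both verify join preservation of ${\mathbf L}^t$, establish the lax inequality by using that $(i,k)\in S_1$ implies $(t(i),t(k))\in S_2$ so the first join is over a subset of the indexing set of the second, and then check identity and composition pointwise for contravariant functoriality. Your explicit verification of join preservation and your remark on why only a lax inequality holds are a bit more detailed than the paper's ``Evidently,'' but the argument is the same.
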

\begin{proof} Evidently,   ${\mathbf L}^{t}$ is a homomorphism in ${\mathbb S}$. 
	Moreover, we compute:
	$$
	\begin{array}{r c l}
		(F^{{\mathbf J}_1}({\mathbf L}^{t}(x)))(i)&=&%
		\bigvee _{(i,k)\in J_1} {\mathbf L}^{t}(x)(k)=\bigvee _{(i,k)\in S_1} x(t(k))\\[0.2cm]
		&\leq&\bigvee _{(t(i),l)\in S_2} x(l)=(F^{\mathbf J_2}(x))(t(i))=%
		({\mathbf L}^t (F^{\mathbf J_2} (x))(i)
	\end{array}
	$$
	for all $x\in L^{T_2}$ and  $i\in T_1$. Now, let 
	$s\colon\mathbf  J_0\longrightarrow\ \mathbf J_1$ be a homomorphism of frames 
	and let   $x\in L^{T_2}$ and every $i\in T_0$. We compute:
	$$
	{\mathbf L}^{t\circ s}(x)(i)=x(t(s(i))={\mathbf L}^{t}(x)(s(i))={\mathbf L}^{s}({\mathbf L}^{t}(x))(i)=%
	\left(({\mathbf L}^{s}\circ {\mathbf L}^{t})(x)\right)(i).%
	$$
	Clearly, ${\mathbf L}^{{\mathrm id}_{\mathbf J_2}}(x)(i)=x(i)={\mathrm id}_{{\mathbf L}^{\mathbf J_2}}(x)(i)$ 
	for all  $x\in L^{T_2}$ and  $i\in T_2$. Hence ${\mathbf L}^{-}$ is really a contravariant functor from $\mathbb{J}$ 
	to $\FSUPL$.
\end{proof}

Recall that ${\mathbf L}^{t}$ taken as a  homomorphism in ${\mathbb S}$ is nothing else 
as the {\em backward powerset operator} 
$t^{\leftarrow}\colon {\mathbf L}^{{T}_2}\longrightarrow\ {\mathbf L}^{T_1}$ 
first introduced by L. A. Zadeh \cite{Zadeh,Solovyov} for the real unit interval $[0,1]$. 

Let $\mathbf L$ be a sup-semilattice and ${\mathbf J}=(T,S)$ a frame. Then, 
for arbitrary $x\in L$ and $i\in T$, we denote $x_{iS}$ as an element of $L^T$ such that 
$x_{iS}(j)=x$ if and only if $(i,j)\in S$ and otherwise we put $x_{iS}(j)=0$. 
In particular, when $S$ will be the identity relation on $T$ we denote $x_{iS}$ by $x_{i=}$. 

Let $\mathbf H=(\mathbf G, F)$ be an $F$-sup-semilattice  and ${\mathbf J}=(T,S)$ a frame. We put 
$$
[\mathbf J, \mathbf H]=\{(x_{iS}\vee F(x)_{i=}, F(x)_{i=})\mid x\in G, i\in T\}.
$$

This means that we try to encode the unary operation $F$ within an ordinary sup-semilattice ${\mathbf J}\otimes {\mathbf H}$ that will be a quotient of $\mathbf G^{T}$ via 
the prenucleus ${j[\mathbf J, \mathbf H]}$.

\begin{definition} Let $\mathbf J=(T,S)$ be a frame and $\mathbf H=(\mathbf G, F)$ 
	an $F$-sup-semilattice. We then define a sup-semilattice 
	${\mathbf J}\otimes {\mathbf H}$ as follows: 
	$${\mathbf J}\otimes {\mathbf H}={\mathbf G}^T_{j[\mathbf J, \mathbf H]}.$$
\end{definition}

Let ${\mathbf J}_1=(T_1,S_1)$ and ${\mathbf J}_2=(T_2,S_2)$ be frames, ${\mathbf L}$ 
a sup-semilattice and $t\colon T_1\longrightarrow\ T_2$ a map. We 
define a homomorphism $t^{\rightarrow}\colon {\mathbf L}^{T_1}\longrightarrow\ {\mathbf L}^{T_2}$ such that for 
all $x\in L^{T_1}$ and $k\in T_2$ we put 
$(t^{\rightarrow}(x))(k)=\bigvee\{x(i)\mid t(i)=k\}$. In this case $t^{\rightarrow}$ is usually called the 
{\em forward powerset operator} \cite{Zadeh,Solovyov} and $t^{\rightarrow}$ is left adjoint to $t^{\leftarrow}$. Moreover, 
$P\colon \mathbb{J} \to {\mathbb S}$  defined by $P({\mathbf J})= {\mathbf L}^{T}$ and 
$P({t})= t^{\rightarrow}$ is a functor (this follows immediately from \cite[Theorem 2.9]{Rod}).  

\begin{theorem} \label{Jtensor} Let $\mathbf J_1=(T_1,S_1)$ and $\mathbf J_2=(T_2,S_2)$ be frames, 
	$t\colon{}\mathbf J_1\to\ \mathbf J_2$ a homomorphism of frames and $\mathbf H=(\mathbf G, F)$ 
	an $F$-sup-semilattice. Then there exists a unique morphism 
	$t\otimes \mathbf H\colon \mathbf J_1\otimes \mathbf H\to\mathbf  J_2\otimes \mathbf H$ 
	of sup-semilattices such that the following diagram commutes:
	\begin{center}
		
		\begin{tikzpicture}
			\node (alfa') at (7,5) {$\mathbf G^{T_1}$};
			\node (beta') at (7,2) {$\mathbf G^{T_2}$};
			\node (gama') at (12,2) {$\mathbf J_2\otimes \mathbf H$};
			\node (delta') at (12,5) {$\mathbf J_1\otimes \mathbf H$};
			
			\node (1') at (9.5,4.5) {$\mbox{\rm n}(j[\mathbf J_1,\mathbf H])$};
			\node (2') at (9.5,2.5) {$\mbox{\rm n}(j[\mathbf J_2,\mathbf H])$};
			\node (3') at (7.4,3.55) {$t^{\rightarrow}$};
			\node (4') at (11.2,3.55) {$t\otimes \mathbf H$};
			
			\draw [->](alfa') -- (delta');
			\draw [->](beta') -- (gama');
			\draw [->](delta') -- (gama');
			\draw [->](alfa') -- (beta');
		\end{tikzpicture}
	\end{center}
	
	Moreover, then $-\otimes\mathbf  H$ is a functor from $\mathbb{J}$ to ${\mathbb S}$.
\end{theorem}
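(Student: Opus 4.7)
The plan is to apply Lemma~\ref{lemprenucleus2} to the composite $g := \mbox{\rm n}(j[\mathbf{J}_2,\mathbf{H}])\circ t^{\rightarrow}$, regarded as a map from $\mathbf{G}^{T_1}$ into $\mathbf{J}_2\otimes \mathbf{H}$. Both factors are sup-semilattice morphisms (the forward powerset operator by construction, the nucleus by Lemma~\ref{lemnucleus}), so $g$ is one too. Endowing $\mathbf{G}^{T_1}$ and $\mathbf{J}_2\otimes \mathbf{H}$ with the trivial operator $F=\mathrm{id}$ makes the hypothesis $(F\times F)([\mathbf{J}_1,\mathbf{H}])\subseteq [\mathbf{J}_1,\mathbf{H}]$ of Lemma~\ref{lemprenucleus2} automatic and reduces lax morphisms of $F$-sup-semilattices there to plain sup-semilattice morphisms. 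The entire argument therefore boils down to verifying the equaliser condition $g(c)=g(d)$ for every $(c,d)\in [\mathbf{J}_1,\mathbf{H}]$; once this is in hand, the lemma produces a unique morphism $t\otimes\mathbf{H}$ of sup-semilattices with $g=(t\otimes\mathbf{H})\circ\mbox{\rm n}(j[\mathbf{J}_1,\mathbf{H}])$, which is precisely the asserted commutativity.

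The verification rests on two pointwise computations with $t^{\rightarrow}$. For $k\in T_2$ one finds $t^{\rightarrow}(x_{iS_1})(k)=x$ whenever some $i'\in T_1$ satisfies $t(i')=k$ and $(i,i')\in S_1$, and $0$ otherwise; since $t$ preserves the frame relation, such an $i'$ forces $(t(i),k)\in S_2$, which yields the pointwise bound $t^{\rightarrow}(x_{iS_1})\leq x_{t(i)S_2}$. On the other hand $t^{\rightarrow}(F(x)_{i=})=F(x)_{t(i)=}$ on the nose. Applying $\mbox{\rm n}(j[\mathbf{J}_2,\mathbf{H}])$ to the image of a generator $(x_{iS_1}\vee F(x)_{i=},\ F(x)_{i=})\in [\mathbf{J}_1,\mathbf{H}]$, using monotonicity of the nucleus together with the identification of the pair $(x_{t(i)S_2}\vee F(x)_{t(i)=},\ F(x)_{t(i)=})\in [\mathbf{J}_2,\mathbf{H}]$, then gives $g(x_{iS_1}\vee F(x)_{i=})\leq g(F(x)_{i=})$; the reverse inequality is immediate from $F(x)_{i=}\leq x_{iS_1}\vee F(x)_{i=}$ and monotonicity of $g$.

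Functoriality is forced by the uniqueness clause. Since $\mathrm{id}^{\rightarrow}=\mathrm{id}$, the identity on $\mathbf{J}\otimes\mathbf{H}$ satisfies the defining square for $\mathrm{id}_{\mathbf{J}}\otimes\mathbf{H}$. For composable frame homomorphisms $t\colon \mathbf{J}_1\to\mathbf{J}_2$ and $s\colon \mathbf{J}_2\to\mathbf{J}_3$, pasting the two defining squares and invoking $(s\circ t)^{\rightarrow}=s^{\rightarrow}\circ t^{\rightarrow}$ shows that $(s\otimes\mathbf{H})\circ(t\otimes\mathbf{H})$ fits into the defining square for $(s\circ t)\otimes\mathbf{H}$, so uniqueness forces the two to coincide. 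The only non-formal ingredient in the whole argument is the pointwise bound $t^{\rightarrow}(x_{iS_1})\leq x_{t(i)S_2}$: this is the sole place where the frame-homomorphism hypothesis on $t$ is used, and without it there is no reason for $g$ to identify the generators of $[\mathbf{J}_1,\mathbf{H}]$.
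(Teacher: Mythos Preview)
Your proof is correct and follows essentially the same route as the paper: both arguments apply Lemma~\ref{lemprenucleus2} to $g=\mbox{\rm n}(j[\mathbf{J}_2,\mathbf{H}])\circ t^{\rightarrow}$ after verifying $g(c)=g(d)$ on generators via the two pointwise computations $t^{\rightarrow}(x_{iS_1})\leq x_{t(i)S_2}$ and $t^{\rightarrow}(F(x)_{i=})=F(x)_{t(i)=}$, and then derive functoriality from uniqueness and $(s\circ t)^{\rightarrow}=s^{\rightarrow}\circ t^{\rightarrow}$. Your explicit remark that one works with the trivial operator $F=\mathrm{id}$ on $\mathbf{G}^{T_1}$ (so that the hypothesis $(F\times F)(X)\subseteq X$ is automatic and lax morphisms collapse to sup-morphisms) is a useful clarification that the paper leaves implicit.
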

\begin{proof} Let $x\in G$ and $i\in I$. Then for an arbitrary $l\in J_2$ we get:
	
	$$t^{\rightarrow}(x_{iS_1})(l)=\bigvee \{x_{iS_1}(k) \mid t(k)=l\}\leq x_{t(i)S_2}(l)$$
	
	and
	
	$$t^{\rightarrow}(F_{x(i)})(l)=\bigvee  \{F(x)_{i=}(k) \mid t(k)=l\}=F(x)_{t(i)=}(l).$$
	
	Therefore it holds that 
	
	$$\mbox{\rm n}(j[\mathbf J_2,\mathbf H])(t^{\rightarrow}(x_{iS_1}))\leq %
	\mbox{\rm n}(j[\mathbf J_2,\mathbf H])(x_{t(i)S_2}(l))$$
	
	and
	
	$$\mbox{\rm n}(j[\mathbf J_2,\mathbf H])(t^{\rightarrow}(F_{x(i)}))=%
	\mbox{\rm n}(j[\mathbf J_2,\mathbf H])(F(x)_{t(i)=})=\mbox{\rm n}(j[\mathbf J_2,\mathbf H])(x_{t(i)S_2}(l)).$$

	Hence we obtain that 
	$$\mbox{\rm n}(j[\mathbf J_2,\mathbf H])(t^{\rightarrow}(x_{iS_1}\vee F_{x(i)})))=%
	\mbox{\rm n}(j[\mathbf J_2,\mathbf H])(F(x)_{t(i)=}).$$
	
	From Lemma \ref{lemprenucleus2} we obtain that there is a unique morphism $t\otimes \mathbf H$ 
	of sup-semilattices such that 
	$\mbox{\rm n}(j[\mathbf J_2,\mathbf H])\circ t^{\rightarrow}=(t\otimes \mathbf H)\circ %
	\mbox{\rm n}(j[\mathbf J_1,\mathbf H])$. 
	
	Let us show that $-\otimes\mathbf  H$ is a functor. Evidently, for  
	$\mathbf J=(T,S)$ and $\mbox{\rm id}_T$ we have that 
	$\mbox{\rm id}_T^{\rightarrow}=\mbox{\rm id}_{\mathbf G^{T}}$.
	Hence $\mbox{\rm id}_T\otimes  \mathbf H=\mbox{\rm id}_{\mathbf  J\otimes\mathbf  H}$. 
	Now, let $t\colon{}\mathbf J_1\to\ \mathbf J_2$  and $s\colon{}\mathbf J_2\to\ \mathbf J_3$ 
	be  homomorphisms of frames. Then $(s\circ t)^{\rightarrow}=s^{\rightarrow}\circ t^{\rightarrow}$. From the uniqueness 
	property of $t\otimes\mathbf  H$, $s\otimes\mathbf  H$ and $(s\circ t)\otimes\mathbf  H$, and a little bit of 
	diagram chasing we obtain that $(s\circ t)\otimes\mathbf  H=(s\otimes\mathbf  H)\circ (t\otimes\mathbf  H)$.
\end{proof}

\begin{theorem} \label{Htensor}  Let $\mathbf H_1=(\mathbf G_1, F_1), \mathbf H_2=(\mathbf G_2, F_2)$ be 
	$F$-sup-semilattices,  $f\colon \mathbf H_1 \to \mathbf H_2$ a  lax morphism of $F$-sup-semi\-lattices
	and $\mathbf J=(T,S)$ a frame. Then there is a unique morphism 
	$\mathbf J\otimes f\colon \mathbf J\otimes \mathbf H_1\to\ \mathbf J\otimes \mathbf H_2$ of 
	sup-semilattices such that the following diagram commutes:
	
	\begin{center}
		\begin{tikzpicture}
			
			\node (alfa') at (7,5) {$\mathbf G_1^{T}$};
			\node (beta') at (7,2) {$\mathbf G_2^{T}$};
			\node (gama') at (12,2) {$\mathbf J\otimes \mathbf H_2$};
			\node (delta') at (12,5) {$\mathbf J\otimes \mathbf H_1$};
			
			\node (1') at (9.5,4.5) {$\mbox{\rm n}(j[\mathbf J,\mathbf H_1])$};
			\node (2') at (9.5,2.5) {$\mbox{\rm n}(j[\mathbf J,\mathbf H_2])$};
			\node (3') at (7.45,3.55) {$f^{\mathbf J}$};
			\node (4') at (11.2,3.55) {$\mathbf J\otimes f$};
			
			\draw [->](alfa') -- (delta');
			\draw [->](beta') -- (gama');
			\draw [->](delta') -- (gama');
			\draw [->](alfa') -- (beta');
		\end{tikzpicture}
	\end{center}
	
	Moreover, $\mathbf J\otimes -$ is a functor from $\FSUPL$ to $\mathbb S$.
\end{theorem}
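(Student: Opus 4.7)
I would construct $\mathbf J\otimes f$ via the universal property of Lemma~\ref{lemprenucleus2}, applied to the composite
\[
g := \mbox{\rm n}(j[\mathbf J,\mathbf H_2])\circ f^{\mathbf J}\colon \mathbf G_1^T\longrightarrow \mathbf J\otimes \mathbf H_2
\]
with $X = [\mathbf J,\mathbf H_1]\subseteq (G_1^T)^2$, working in $\mathbb S$ (so the ``$F$'' of the lemma is taken to be the identity, rendering the compatibility hypothesis vacuous). The map $f^{\mathbf J}$ is defined pointwise by $f^{\mathbf J}(y)(i)=f(y(i))$; this is a sup-semilattice morphism because $f$ is, and the lax compatibility of $f$ with $F_1,F_2$ plays no role at this stage. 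What remains in order to invoke the lemma is to check that $g(c)=g(d)$ for every pair $(c,d)=(x_{iS}\vee F_1(x)_{i=},\, F_1(x)_{i=})$ in $[\mathbf J,\mathbf H_1]$.

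This identification is the main step. Pointwise evaluation gives $f^{\mathbf J}(x_{iS})=f(x)_{iS}$ and $f^{\mathbf J}(F_1(x)_{i=})=f(F_1(x))_{i=}$, so writing $n_2$ for $\mbox{\rm n}(j[\mathbf J,\mathbf H_2])$, the goal reduces to
\[
n_2\bigl(f(x)_{iS}\vee f(F_1(x))_{i=}\bigr)\;=\;n_2\bigl(f(F_1(x))_{i=}\bigr).
\]
Setting $y:=f(x)\in G_2$, the pair $(y_{iS}\vee F_2(y)_{i=},\,F_2(y)_{i=})\in [\mathbf J,\mathbf H_2]$ is collapsed by $n_2$, which yields $n_2(f(x)_{iS})\le n_2(F_2(f(x))_{i=})$. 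Because $f$ is a lax morphism, $F_2(f(x))\le f(F_1(x))$ in $\mathbf G_2$, so monotonicity of $n_2$ forces $n_2(F_2(f(x))_{i=})\le n_2(f(F_1(x))_{i=})$. Chaining these inequalities, $f(x)_{iS}\le n_2(f(F_1(x))_{i=})$; since the right-hand side is a fixed point of $n_2$ that already dominates $f(F_1(x))_{i=}$, applying $n_2$ to $f(x)_{iS}\vee f(F_1(x))_{i=}$ produces exactly $n_2(f(F_1(x))_{i=})$. This is the only place where the lax-morphism hypothesis is used, and it is the substantive content of the argument.

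Given the identification, Lemma~\ref{lemprenucleus2} produces the unique sup-semilattice morphism $\mathbf J\otimes f$ making the square commute. For functoriality of $\mathbf J\otimes -$, the identity case follows at once from $(\mbox{\rm id}_{\mathbf H})^{\mathbf J}=\mbox{\rm id}_{\mathbf G^T}$ together with the uniqueness clause; for composition, the pointwise identity $(h\circ f)^{\mathbf J}=h^{\mathbf J}\circ f^{\mathbf J}$ shows that both $\mathbf J\otimes (h\circ f)$ and $(\mathbf J\otimes h)\circ (\mathbf J\otimes f)$ fit the outer rectangle obtained by stacking the two defining squares, so they coincide by uniqueness. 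I do not anticipate any obstacle beyond the lax-morphism computation isolated above.
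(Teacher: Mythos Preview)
Your proposal is correct and follows essentially the same route as the paper: both apply Lemma~\ref{lemprenucleus2} to $g=\mbox{\rm n}(j[\mathbf J,\mathbf H_2])\circ f^{\mathbf J}$, verify that $g$ identifies the generating pairs of $[\mathbf J,\mathbf H_1]$ via the chain $n_2(f(x)_{iS})\le n_2(F_2(f(x))_{i=})\le n_2(f(F_1(x))_{i=})$ (using the lax hypothesis at the second step), and then deduce functoriality from $(h\circ f)^{\mathbf J}=h^{\mathbf J}\circ f^{\mathbf J}$ together with the uniqueness clause. Your remark that the lemma is invoked in $\mathbb S$ with the identity operator (so the hypothesis $(F\times F)(X)\subseteq X$ is vacuous) is a useful clarification that the paper leaves implicit.
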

\begin{proof}
	For an arbitrary $x\in G_1$ and $i\in S$, we get:
	$$
	\begin{array}{r@{\,\,}c@{\,\,}l}
		\mbox{\rm n}(j[\mathbf J,\mathbf H_2])(f^{\mathbf J}(x_{iS}))&=&%
		\mbox{\rm n}(j[\mathbf J,\mathbf H_2])(f(x)_{iS})\leq %
		\mbox{\rm n}(j[\mathbf J,\mathbf H_2])(F_2(f(x))_{i=})\\[0.2cm]
		&\leq&\mbox{\rm n}(j[\mathbf J,\mathbf H_2])(f(F_1(x))_{i=})=%
		\mbox{\rm n}(j[\mathbf J,\mathbf H_2])(f^{\mathbf J}(F_1(x)_{i=})).
	\end{array}
	$$
	Hence 
	$$
	\mbox{\rm n}(j[\mathbf J,\mathbf H_2])(f^{\mathbf J}(x_{iS}\vee F_1(x)_{i=}))=%
	\mbox{\rm n}(j[\mathbf J,\mathbf H_2])(f^{\mathbf J}(F_1(x)_{i=})).
	$$
	We conclude as before  from Lemma \ref{lemprenucleus2}  that there is a unique morphism 
	$\mathbf J\otimes f\colon \mathbf J\otimes \mathbf H_1\to\ \mathbf J\otimes \mathbf H_2$
	of sup-semilattices such that 
	$\mbox{\rm n}(j[\mathbf J,\mathbf H_2])\circ f^{\mathbf J}=(\mathbf J\otimes f)\circ %
	\mbox{\rm n}(j[\mathbf J,\mathbf H_1])$.

	Let us verify that $\mathbf J\otimes -$ is a functor. 
	Let $\mathbf H=(\mathbf G, F)$ be an $F$-sup-semilattice and 
	$\mbox{\rm id}_{\mathbf H}$ the identity morphism on $\mathbf H$.

	We know from Theorem  \ref{funSSF} that $-^{\mathbf J}$ is a functor from $\mathbb S$ to $\mathbb S_F$, 
	hence also from  $\mathbb S_F$ to $\mathbb S$ (we take twice the forgetful functor  from  $\mathbb S_F$ to $\mathbb S$).
	Hence  we have that 
	$\mbox{\rm id}_{\mathbf H}^{\mathbf J}=\mbox{\rm id}_{\mathbf G^{T}}$. Therefore 
	$\mathbf J\otimes \mbox{\rm id}_{\mathbf H}=\mbox{\rm id}_{\mathbf J\otimes \mathbf H}$.

	Now, let $f\colon{}\mathbf H_1\to\ \mathbf H_2$  and $g\colon{}\mathbf H_2\to\ \mathbf H_3$ 
	be   lax morphisms of $F$-sup-semi\-lattices. 
	Then $(g\circ f)^{\mathbf J}=g^{\mathbf J}\circ f^{\mathbf J}$. From the uniqueness 
	property of $\mathbf J\otimes  f$, $\mathbf J\otimes g$ and $\mathbf J\otimes (g\circ f)$ we conclude  that 
	$\mathbf J\otimes (g\circ f)=(\mathbf J\otimes g)\circ (\mathbf J\otimes  f)$.
\end{proof}

\begin{definition}\label{framefromfss}  Let $\mathbf H=(\mathbf G, F)$ be an $F$-sup-se\-mi\-lattice and 
	let $\mathbf L$ be a sup-se\-mi\-lattice. We define a frame $\mathbf J[\mathbf H, \mathbf L]=%
	(T_{[\mathbf H, \mathbf L]},S_{[\mathbf H, \mathbf L]})$ 
	such that $T_{[\mathbf H, \mathbf L]}={\mathbb S}(\mathbf G, \mathbf L)$, i.e., 
	the elements of $T_{[\mathbf H, \mathbf L]}$ are morphisms $\alpha$ of sup-semilattices 
	from $\mathbf G$ to $\mathbf L$. The relation $S_{[\mathbf H, \mathbf L]}$ is defined for 
	$\alpha,\beta\in T_{[\mathbf H, \mathbf L]}$ as follows:
	\begin{center}
		$\alpha  \mathrel{S_{[\mathbf H, \mathbf L]}} \beta$ if and only for all $x\in G$  $\beta(x)\leq \alpha(F(x))$.
	\end{center}
\end{definition}

\begin{theorem}\label{funSJrel} Let $\mathbf L_1, \mathbf L_2$ be sup-semi-lattices, let be $\mathbf H=(\mathbf G, F)$ an $F$-semilattice 
	and  let $f\colon \mathbf  L_1\to \ \mathbf L_2$ be a morphism of sup-semi-lattices. Then there exists 
	a homomorphism $\mathbf J[\mathbf H, f]\colon \mathbf J[\mathbf H, \mathbf L_1]\to \mathbf J[\mathbf H, \mathbf L_2]$ 
	of frames such that 
	$$(\mathbf J[\mathbf H, f](\alpha))(x)=f(\alpha(x))$$ 
	for all $\alpha\in T_{[\mathbf H, \mathbf L_1]}$ and for all $x\in G$. Moreover, $\mathbf J[\mathbf H, -]$ 
	is a functor from $\mathbb S$ to $\mathbb{J}$.
\end{theorem}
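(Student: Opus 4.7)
The plan is to define $\mathbf J[\mathbf H, f]$ on a point $\alpha \in T_{[\mathbf H, \mathbf L_1]} = \mathbb S(\mathbf G, \mathbf L_1)$ by post-composition with $f$, i.e.\ $\mathbf J[\mathbf H, f](\alpha) = f \circ \alpha$. Reading off the proposed formula $(\mathbf J[\mathbf H, f](\alpha))(x) = f(\alpha(x))$, this is the only possible definition, so uniqueness is automatic; what I must check is that it makes sense as a frame homomorphism and behaves functorially in $f$.

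First I would verify well-definedness on objects: since $f$ and $\alpha$ are both morphisms in $\mathbb S$, their composition $f \circ \alpha \colon \mathbf G \to \mathbf L_2$ preserves arbitrary joins, so $f \circ \alpha \in T_{[\mathbf H, \mathbf L_2]}$. Next I would check preservation of the frame relation. Assume $\alpha \mathrel{S_{[\mathbf H, \mathbf L_1]}} \beta$, meaning $\beta(x) \le \alpha(F(x))$ for all $x \in G$. Since $f$ preserves joins it is in particular order-preserving, so applying $f$ gives $f(\beta(x)) \le f(\alpha(F(x)))$, i.e.\ $(\mathbf J[\mathbf H, f](\beta))(x) \le (\mathbf J[\mathbf H, f](\alpha))(F(x))$ for all $x \in G$. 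By Definition \ref{framefromfss} this is exactly $\mathbf J[\mathbf H, f](\alpha) \mathrel{S_{[\mathbf H, \mathbf L_2]}} \mathbf J[\mathbf H, f](\beta)$, so $\mathbf J[\mathbf H, f]$ is a frame homomorphism.

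Finally I would establish functoriality. For the identity on $\mathbf L$, $\mathbf J[\mathbf H, \mathrm{id}_{\mathbf L}](\alpha) = \mathrm{id}_{\mathbf L} \circ \alpha = \alpha$, so $\mathbf J[\mathbf H, \mathrm{id}_{\mathbf L}] = \mathrm{id}_{\mathbf J[\mathbf H, \mathbf L]}$. For composition, given $f \colon \mathbf L_1 \to \mathbf L_2$ and $g \colon \mathbf L_2 \to \mathbf L_3$, associativity of composition yields
$$
\mathbf J[\mathbf H, g \circ f](\alpha) = (g \circ f) \circ \alpha = g \circ (f \circ \alpha) = \mathbf J[\mathbf H, g](\mathbf J[\mathbf H, f](\alpha))
$$
for every $\alpha$, hence $\mathbf J[\mathbf H, g \circ f] = \mathbf J[\mathbf H, g] \circ \mathbf J[\mathbf H, f]$.

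There is no real obstacle here: everything reduces to the observation that post-composition with a sup-preserving $f$ both preserves the class of sup-preserving maps and is monotone, which is what the relation $S_{[\mathbf H, \mathbf L]}$ requires. The only thing to watch is to invoke monotonicity of $f$ (a direct consequence of join-preservation) at the right moment when verifying the relation is preserved.
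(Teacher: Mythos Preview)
Your proof is correct and follows essentially the same approach as the paper: define $\mathbf J[\mathbf H,f]$ by post-composition, use monotonicity of $f$ to verify that the relation $S_{[\mathbf H,\mathbf L]}$ is preserved, and check functoriality via associativity of composition. If anything, your argument is slightly more careful than the paper's, since you explicitly verify that $f\circ\alpha$ lands in $T_{[\mathbf H,\mathbf L_2]}=\mathbb S(\mathbf G,\mathbf L_2)$, a step the paper leaves implicit.
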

\begin{proof} Let $\alpha, \beta\in T_{[\mathbf H, \mathbf L_1]}$ be such that $\alpha \mathrel{S_{[\mathbf H, \mathbf L_1]}} \beta$. 
	Then for any $x\in G$ we have that $\beta(x)\leq \alpha(F(x))$. Therefore 
	$(\mathbf J[\mathbf H, f](\beta))(x)=f(\beta(x))\leq f(\alpha(F(x))=(\mathbf J[\mathbf H, f](\alpha))(x)$. We obtain  
	$\mathbf J[\mathbf H, f](\alpha) \mathrel{S_{[\mathbf H, \mathbf L_1]}} [H,f]\mathbf J[\mathbf H, f](\beta)$.
	
	Let us check that  $\mathbf J[\mathbf H, -]$ is a functor. Let $\mathbf L$ be a sup-semi-lattice, 
	$\mbox{\rm id}_{\mathbf L}$ the identity morphism of sup-semilattices on $\mathbf L$ and 
	$\alpha\in T_{[\mathbf H, \mathbf L]}$. We compute:
	$$
	\mathbf J[\mathbf H, \mbox{\rm id}_{\mathbf L}](\alpha)=%
	\mbox{\rm id}_{\mathbf L} \circ \alpha=\alpha=\mbox{\rm id}_{\mathbf J[\mathbf H, \mathbf L]}(\alpha). 
	$$
	
	Now, let $f\colon{}\mathbf L_1\to\ \mathbf L_2$  and $g\colon{}\mathbf L_2\to\ \mathbf L_3$ 
	be   morphisms of sup-semi\-lattices. 
	Then 
	$$
	\begin{array}{r@{\,\,}c@{\,\,}l}
		\mathbf J[\mathbf H, g\circ f](\alpha)&=&(g \circ f)\circ \alpha=g \circ (f\circ \alpha)=%
		g\circ \mathbf J[\mathbf H, f](\alpha)\\[0.2cm]
		&=&J[\mathbf H, g](\mathbf J[\mathbf H, f](\alpha))=%
		(J[\mathbf H, g]\circ \mathbf J[\mathbf H, f])(\alpha).
	\end{array}$$
\end{proof}

\begin{theorem}\label{funSFJrel}  Let $\mathbf H_1=(\mathbf G_1, F_1), %
	\mathbf H_2=(\mathbf G_2, F_2)$ be $F$-semilattices, $\mathbf L$ a sup-semilattice 
	and $f\colon \mathbf H_1\to\ \mathbf H_2$ a lax morphism of  $F$-semilattices. 
	Then there exists a homomorphism 
	$\mathbf J[f, \mathbf L]\colon \mathbf J[\mathbf H_2, \mathbf L]\to \mathbf J[\mathbf H_1, \mathbf L]$ 
	of frames such that 
	$$(\mathbf J[f, \mathbf L](\alpha))(x)=\alpha(f(x))=(\alpha \circ f)(x)$$ 
	for all $\alpha\in T_{[\mathbf H_2, \mathbf L]}$ and for all $x\in G_1$. Moreover, $\mathbf J[-, \mathbf L]$ 
	is a  contravariant functor functor from $\FSUPL$ to $\mathbb{J}$.
\end{theorem}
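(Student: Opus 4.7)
The plan is to verify three things in order: well-definedness of $\mathbf{J}[f,\mathbf{L}](\alpha)$ as an element of the target frame, preservation of the relation $S$, and then the two functoriality axioms (contravariantly).

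First I would check that the formula $(\mathbf{J}[f,\mathbf{L}](\alpha))(x) = \alpha(f(x))$ actually produces an element of $T_{[\mathbf{H}_1,\mathbf{L}]} = \mathbb{S}(\mathbf{G}_1,\mathbf{L})$. Since $f$ is a lax morphism of $F$-sup-semilattices, it is in particular a morphism in $\mathbb{S}$, so it preserves arbitrary joins; and $\alpha\in\mathbb{S}(\mathbf{G}_2,\mathbf{L})$ preserves joins. Hence $\alpha\circ f$ is a sup-semilattice morphism from $\mathbf{G}_1$ to $\mathbf{L}$, as required.

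The main (and really only) step that uses both structural hypotheses is preservation of the relation. Suppose $\alpha \mathrel{S_{[\mathbf{H}_2,\mathbf{L}]}} \beta$, i.e., $\beta(y)\leq \alpha(F_2(y))$ for every $y\in G_2$. I need to show that $\mathbf{J}[f,\mathbf{L}](\alpha)\mathrel{S_{[\mathbf{H}_1,\mathbf{L}]}}\mathbf{J}[f,\mathbf{L}](\beta)$, which unpacks to the inequality $\beta(f(x))\leq \alpha(f(F_1(x)))$ for every $x\in G_1$. Specializing the hypothesis at $y = f(x)$ gives $\beta(f(x))\leq \alpha(F_2(f(x)))$. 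Because $f$ is a lax morphism, $F_2(f(x))\leq f(F_1(x))$, and because $\alpha$ is a sup-semilattice morphism it is order-preserving, so $\alpha(F_2(f(x)))\leq \alpha(f(F_1(x)))$. Chaining these two inequalities produces exactly what is needed. This is the one place where the lax (as opposed to strict) property of $f$ is critical: only the inequality $F_2\circ f\leq f\circ F_1$ is available, and it must be composed with an order-preserving $\alpha$ on the outside, which is why the direction of the inequality matches the direction of the relation $S_{[\mathbf{H},\mathbf{L}]}$.

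Finally, for the functoriality, taking $f=\mathrm{id}_{\mathbf{H}}$ gives $\mathbf{J}[\mathrm{id}_{\mathbf{H}},\mathbf{L}](\alpha) = \alpha\circ \mathrm{id}_{\mathbf{G}} = \alpha$, so identities are preserved. For the contravariance, given lax morphisms $f\colon \mathbf{H}_1\to \mathbf{H}_2$ and $g\colon \mathbf{H}_2\to \mathbf{H}_3$ and any $\alpha\in T_{[\mathbf{H}_3,\mathbf{L}]}$, I compute
$$
\mathbf{J}[g\circ f,\mathbf{L}](\alpha) = \alpha\circ (g\circ f) = (\alpha\circ g)\circ f
= \mathbf{J}[f,\mathbf{L}](\alpha\circ g) = \mathbf{J}[f,\mathbf{L}]\bigl(\mathbf{J}[g,\mathbf{L}](\alpha)\bigr),
$$
so $\mathbf{J}[g\circ f,\mathbf{L}] = \mathbf{J}[f,\mathbf{L}]\circ \mathbf{J}[g,\mathbf{L}]$, confirming contravariance. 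The only non-routine part of the whole argument is the interaction between the lax condition on $f$ and the definition of $S_{[\mathbf{H},\mathbf{L}]}$ discussed above; everything else is a direct computation.
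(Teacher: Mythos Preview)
Your proof is correct and follows essentially the same approach as the paper: the relation-preservation step is proved via $\beta(f(x))\leq \alpha(F_2(f(x)))\leq \alpha(f(F_1(x)))$ using the lax inequality and monotonicity of $\alpha$, and the functoriality axioms are checked by the same associativity-of-composition computation. The only difference is that you include an explicit well-definedness check (that $\alpha\circ f$ lies in $\mathbb{S}(\mathbf{G}_1,\mathbf{L})$), which the paper leaves implicit.
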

\begin{proof}
	Let $\alpha, \beta\in T_{[\mathbf H_2, \mathbf L]}$ be such that 
	$\alpha \mathrel{S_{[\mathbf H_2, \mathbf L]}} \beta$. Then for any $x\in G_2$ we have 
	$\beta(x)\leq \alpha(F(x))$. Therefore 
	$$(\mathbf J[f, \mathbf G](\beta))(x)=\beta (f(x))\leq \alpha (F(f(x))%
	\leq \alpha (f(F(x))=(\mathbf J[f, \mathbf G](\alpha))(F(x)).$$ 
	
	We conclude $\mathbf J[f, \mathbf G](\alpha) \mathrel{S_{[\mathbf H_1, \mathbf L]}} \mathbf J[f, \mathbf G](\beta)$.

	Let us verify that  $\mathbf J[ -, \mathbf L]$ is a contravariant functor. Let 
	$\mathbf H=(\mathbf G, F)$ be an  $F$-sup-semi-lattice, 
	$\mbox{\rm id}_{\mathbf H}$ the identity lax morphism of  $F$-sup-semilattices on $\mathbf H$ and 
	$\alpha\in T_{[\mathbf H, \mathbf L]}$. We compute:
	$$
	\mathbf J[\mbox{\rm id}_{\mathbf H}, \mathbf L](\alpha)=%
	\alpha\circ \mbox{\rm id}_{\mathbf L}  =\alpha=\mbox{\rm id}_{\mathbf J[\mathbf H, \mathbf L]}(\alpha). 
	$$
	
	Now, let $f\colon{}\mathbf H_1\to\ \mathbf H_2$  and $g\colon{}\mathbf H_2\to\ \mathbf H_3$ 
	be   lax morphisms of $F$-sup-semi\-lattices. We have:
	$$
	\begin{array}{r@{\,\,}c@{\,\,}l}
		\mathbf J[g\circ f,\mathbf L](\alpha)&=&\alpha\circ (g \circ f)=%
		(\alpha\circ g) \circ f=  \mathbf J[g,\mathbf L](\alpha) \circ f\\[0.2cm]
		&=&\mathbf J[f,\mathbf L](\mathbf J[g,\mathbf L](\alpha))=%
		(\mathbf J[f,\mathbf L]\circ \mathbf J[g,\mathbf L])(\alpha).
	\end{array}$$
\end{proof}

\begin{remark} Let $\mathbf H=(\mathbf G, F)$ be a finite  $F$-sup-semi-lattice, 
	$\mathbf L$ a finite sup-semilattice  and $\mathbf J=(T,S)$ a finite frame. 
	Then evidently $\mathbf J \otimes \mathbf H $ is a finite  sup-semilattice, 
	${\mathbf L}^{\mathbf J}$ is a finite  $F$-sup-semi-lattice, and 
	$\mathbf J[\mathbf H,\mathbf L]$ is a finite frame. 
	
\end{remark}
\section{Three adjoint situations}\label{situations}

In this Section we introduce three induced adjoint situations $(\eta, \varepsilon), (\varphi, \psi)$  and $(\nu, \mu)$. 

\begin{theorem}\label{adjSFSL} Let $\mathbf J=(T,S)$ be a frame. Then:
\begin{enumerate}[\rm(a)]
\item For an arbitrary $F$-sup-semilattice $\mathbf H=(\mathbf G, F)$ there exists a lax morphism 
$\eta _{\mathbf H}\colon \mathbf H\to (\mathbf J\otimes \mathbf H)^{\mathbf J}$ 
of $F$-sup-semilattices defined in such a way that 
$$(\eta _{\mathbf H}(x))(i)=\mbox{\rm n}(j[\mathbf J,\mathbf H])(x_{i=}).$$ 
Moreover, 
$\eta=(\eta _{\mathbf H}\colon {\mathbf H}\to ({\mathbf J}\otimes {\mathbf H})^{\mathbf J})_{{\mathbf H}\in \smFSUPL}$ 
is a natural transformation between the identity functor on  $\smFSUPL$ and the endofunctor 
$ ({\mathbf J}\otimes -)^{\mathbf J}$.
\item For an arbitrary sup-semilattice $\mathbf L$ there exists a unique sup-semilattice morphism 
$\varepsilon_{\mathbf L} \colon \mathbf J\otimes {\mathbf L}^{\mathbf J}\to \mathbf L$ 
 such that the following diagram commutes:

\begin{center}
\begin{tikzpicture}[scale=0.7]

\node (alfa') at (12,0) {$\mathbf L$};
  
   \node (gama') at (0,5) {$(\mathbf L^T)^T$};
   \node (delta') at (12,5) {$\mathbf J\otimes \mathbf L^{\mathbf J}$};
   
   \node (1') at (6,5.5) {$\mbox{\rm n}(j[\mathbf J,\mathbf L^{\mathbf J}])$};
   \node (2') at (11,2.5) {$\varepsilon_{\mathbf L}$};
   \node (3') at (6,2) {$e_{\mathbf L}$};

  \draw [->](gama') -- (alfa');
  \draw [->](delta') -- (alfa');
  \draw [->](gama') -- (delta');
  
 \end{tikzpicture}
 \end{center}
 
\noindent{}where $e_{\mathbf L}\colon(\mathbf L^{T})^{T}\to \mathbf L$ is defined by 
 $e_{\mathbf L}(\bar{x})=\bigvee _{i\in T} (\bar{x}(i))(i)$ for 
 any element  $\bar{x}\in (\mathbf  L^{T})^{T}$. Moreover, 
 $\varepsilon=(\varepsilon_{\mathbf L}\colon  \mathbf J \otimes \mathbf  L^{\mathbf J}\to %
 \mathbf L)_{\mathbf L\in \mathbb S}$ is a natural transformation betweeen the endofunctor 
 $\mathbf J \otimes (-)^{\mathbf J}$ and the identity functor on  $ \mathbb S$.

\item There exists an adjoint situation 
$(\eta, \varepsilon)\colon (\mathbf  J\otimes -)\dashv (-^{\mathbf  J})\colon %
\mathbb S \to \FSUPL$. 
\end{enumerate}
\end{theorem}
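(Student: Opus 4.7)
My plan is to prove (a), (b), and (c) in order, exploiting the universal property of $\mbox{n}(j[\mathbf J, \mathbf H])$ from Lemma \ref{lemprenucleus2} as the main technical tool.

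For (a), I first verify that $\eta_{\mathbf H}$ preserves joins: since $\mbox{n}(j[\mathbf J,\mathbf H])$ is a nucleus (hence join-preserving as a map into its fixed points, by Lemma \ref{lemnucleus}) and the assignment $x \mapsto x_{i=}$ is join-preserving into $\mathbf G^T$, the composite is join-preserving pointwise in $i$, hence a morphism in $\mathbb S$. Next I check the lax condition $F^{\mathbf J}(\eta_{\mathbf H}(x)) \leq \eta_{\mathbf H}(F(x))$: evaluating at $i \in T$ gives $\bigvee\{\mbox{n}(j[\mathbf J,\mathbf H])(x_{k=}) \mid (i,k) \in S\}$ on the left, and $\mbox{n}(j[\mathbf J,\mathbf H])(F(x)_{i=})$ on the right; since $x_{k=} \leq x_{iS} \vee F(x)_{i=}$ whenever $(i,k) \in S$, and $\mbox{n}(j[\mathbf J,\mathbf H])$ identifies $x_{iS} \vee F(x)_{i=}$ with $F(x)_{i=}$ by construction of $[\mathbf J, \mathbf H]$, the inequality follows. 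Naturality of $\eta$ in $\mathbf H$ reduces to checking $\eta_{\mathbf H_2}(f(x))(i) = (\mathbf J \otimes f)^{\mathbf J}(\eta_{\mathbf H_1}(x))(i)$, which follows from the defining square of $\mathbf J \otimes f$ in Theorem \ref{Htensor} applied to the generator $x_{i=}$.

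For (b), the existence of $\varepsilon_{\mathbf L}$ follows once I show $e_{\mathbf L}$ factors through $\mbox{n}(j[\mathbf J,\mathbf L^{\mathbf J}])$ via Lemma \ref{lemprenucleus2}. It suffices to check, for each generator $(x_{iS} \vee F^{\mathbf J}(x)_{i=}, F^{\mathbf J}(x)_{i=})$ with $x \in L^T$, that $e_{\mathbf L}(x_{iS}) \leq e_{\mathbf L}(F^{\mathbf J}(x)_{i=})$. The computation gives $e_{\mathbf L}(x_{iS}) = \bigvee\{x(k)(k) \mid (i,k) \in S\} \leq \bigvee\{x(k) \mid (i,k)\in S\}$; but this last join, evaluated pointwise, is dominated by $F^{\mathbf J}(x)(i) = e_{\mathbf L}(F^{\mathbf J}(x)_{i=})$ once we interpret correctly (noting $F^{\mathbf J}(x)_{i=}(k)(l) = F^{\mathbf J}(x)(l)$ iff $k=i$, giving $e_{\mathbf L}(F^{\mathbf J}(x)_{i=}) = F^{\mathbf J}(x)(i) = \bigvee\{x(i)(k)\mid (i,k)\in S\}$; a small bookkeeping check shows the two are in fact equal). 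Thus $e_{\mathbf L}$ is constant on the generating pairs of $j[\mathbf J, \mathbf L^{\mathbf J}]$, giving the factorization. Naturality of $\varepsilon$ in $\mathbf L$ follows because both $\mathbf J \otimes f^{\mathbf J}$ and the evaluation $e$ are defined pointwise in $T$ and commute with the applications of $f$ and the nuclei by the naturality square of $f^{\mathbf J}$ from Theorem \ref{funSSF}.

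For (c), I verify the two triangle identities. First, for $\mathbf H = (\mathbf G, F) \in \FSUPL$, I need $\varepsilon_{\mathbf J \otimes \mathbf H} \circ (\mathbf J \otimes \eta_{\mathbf H}) = \mbox{id}_{\mathbf J \otimes \mathbf H}$; evaluating on a generator $\mbox{n}(j[\mathbf J,\mathbf H])(x_{iS})$ and unwinding the definitions reduces (after commuting the nucleus past the evaluation using the defining diagram of $\mathbf J \otimes \eta_{\mathbf H}$) to the tautology $\mbox{n}(j[\mathbf J,\mathbf H])(x_{iS}) = \mbox{n}(j[\mathbf J,\mathbf H])(x_{iS})$, using density of such generators in $\mathbf J \otimes \mathbf H$. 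Second, for $\mathbf L \in \mathbb S$, I need $(\varepsilon_{\mathbf L})^{\mathbf J} \circ \eta_{\mathbf L^{\mathbf J}} = \mbox{id}_{\mathbf L^{\mathbf J}}$; evaluating at $x \in L^T$ and $i \in T$ gives $\varepsilon_{\mathbf L}(\mbox{n}(j[\mathbf J,\mathbf L^{\mathbf J}])(x_{i=})) = e_{\mathbf L}(x_{i=}) = x(i)$, which is exactly $\mbox{id}_{\mathbf L^{\mathbf J}}(x)(i)$.

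I expect the principal obstacle to be the careful bookkeeping in part (b), specifically tracking the double-indexing when passing between $L^T$ and $(L^T)^T$ while verifying that $e_{\mathbf L}$ sends each generating pair of $j[\mathbf J,\mathbf L^{\mathbf J}]$ to a common value; all remaining parts are essentially formal consequences of Lemma \ref{lemprenucleus2}, Lemma \ref{lemnucleus}, and the universal properties established in Theorems \ref{Jtensor} and \ref{Htensor}.
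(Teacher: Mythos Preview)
Your approach is essentially identical to the paper's: same use of Lemma~\ref{lemprenucleus2} for the factorization in (b), same pointwise computations for the lax condition and naturality in (a), and the same direct verification of the two triangle identities in (c). Two small bookkeeping slips need fixing. In (b), the expressions $x(k)(k)$ and $x(i)(k)$ do not type-check, since here $x\in L^T$ and hence $x(k)\in L$; the correct line is simply
\[
e_{\mathbf L}(x_{iS})=\bigvee_{(i,k)\in S}x(k)=(F^{\mathbf J}(x))(i)=e_{\mathbf L}(F^{\mathbf J}(x)_{i=}),
\]
giving equality at once, exactly as the paper computes. In (c), the elements $\mbox{n}(j[\mathbf J,\mathbf H])(x_{iS})$ need not generate $\mathbf J\otimes\mathbf H$ (consider $S=\emptyset$); instead work with an arbitrary $\bar{x}\in G^T$ and use $\bar{x}=\bigvee_{i\in T}\bar{x}(i)_{i=}$ together with the defining square of $\mathbf J\otimes\eta_{\mathbf H}$, which is what the paper does.
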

\begin{proof}
{(a):} Let us consider an arbitrary $F$-sup-semilattice $\mathbf H=(\mathbf G, F)$.  Evidently, 
$\eta _{\mathbf H}$ preserves arbitrary joins. 
Assume that $x\in G$ and $i\in J$.  We compute: 
$$
\begin{array}{@{}r@{\,\,}c@{\,\,}l}
(F^{\mathbf J} (\eta _{\mathbf H}(x)))(i)&=&\bigvee \{(\eta _{\mathbf H}(x))(k)\mid (i,k)\in S\}\\[0.2cm]
&=&\bigvee \{ \mbox{\rm n}(j[\mathbf J,\mathbf H])(x_{k=})\mid (i,k)\in S\}\\[0.2cm]
&=& \mbox{\rm n}(j[\mathbf J,\mathbf H])(\bigvee \{x_{k=}\mid (i,k)\in S\})=%
\mbox{\rm n}(j[\mathbf J,\mathbf H])(x_{iS})\\[0.2cm]
&\leq&\mbox{\rm n}(j[\mathbf J,\mathbf H])(F(x)_{i=})=(\eta _{\mathbf H}(F(x)))(i).
\end{array}
$$

Therefore $F^{\mathbf J} \circ \eta _{\mathbf H}\leq \eta _{\mathbf H} \circ F$  and hence  
$\eta _{\mathbf H}$ is a lax morphism. Now, let us assume that 
$\mathbf H_1=(\mathbf G_1, F_1)$ and  %
$\mathbf H_2=(\mathbf G_2, F_2)$ are $F$-sup-semilattices, and that 
$f\colon \mathbf H_1\to \mathbf  H_2$ is a  lax morphism of $F$-sup-semilattices. 
We have to show that the following diagram commutes:
\begin{center}
\begin{tikzpicture}

\node (alfa') at (7,5) {$\mathbf H_1$};
  \node (beta') at (7,2) {$(\mathbf J\otimes \mathbf H_1)^J$};
   \node (gama') at (12,2) {$(\mathbf T\otimes \mathbf H_2)^J$};
   \node (delta') at (12,5) {$\mathbf H_2$};
   
   \node (1') at (9.5,4.5) {$f$};
   \node (2') at (9.5,2.5) {$(\mathbf J\otimes f)^{\mathbf J}$};
   \node (3') at (7.5,3.55) {$\eta _{\mathbf H_1}$};
   \node (4') at (11.5,3.55) {$\eta _{\mathbf H_2}$};
   
  \draw [->](alfa') -- (delta');
  \draw [->](beta') -- (gama');
  \draw [->](delta') -- (gama');
  \draw [->](alfa') -- (beta');
 \end{tikzpicture}
 \end{center}
 Assume that $x\in G_1$ and $i\in J$. We compute: 
 
$$
\begin{array}{@{}r@{\,}c@{\,}l}
  (((\mathbf J\otimes f)^{\mathbf J}\circ\eta _{\mathbf H_1})(x))(i)&=&%
  (\mathbf J\otimes f)(\eta _{\mathbf H_1}(i))=(\mathbf J\otimes f)%
  (\mbox{\rm n}(j[\mathbf J,\mathbf H_1])(x_{i=}))\\[0.2cm]
  &=& \mbox{\rm n}(j[\mathbf J,\mathbf H_2])(f^{\mathbf J}(x_{i=}))=%
  \mbox{\rm n}(j[\mathbf J,\mathbf H_2])(f(x)_{i=})\\[0.2cm]
  &=& ((\eta _{\mathbf H_2}\circ f)(x))(i).
  \end{array}
$$

(b): Let $\mathbf L$  be a  sup-semilattice. Assume that $x\in L^T$ and $i\in T$.  We compute: 
 $$
 \begin{array}{@{}r@{\,}c@{\,}l}
 e(x_{iS})&=&\bigvee _{k\in T}(x_{iS}(k))(k)=\bigvee _{iSk}x(k)=(F^{\mathbf J}(x))(i)=%
 \bigvee_{k\in T}F^{\mathbf J}(x)_{i=}(k)(k)\\[0.2cm]
  &=& e(F^{\mathbf J}(x)_{i=}).
  \end{array}$$ 
  
  Moreover, it is transparent that $e_{\mathbf L}$ preserves arbitrary joins. 

By Lemma \ref{lemprenucleus2}   there is a unique morphism 
$\varepsilon_{\mathbf L}\colon \mathbf J\otimes \mathbf L^{\mathbf J}\to\ \mathbf L$
of sup-semilattices such that 
$e=\varepsilon_{\mathbf L}\circ %
\mbox{\rm n}(j[\mathbf J,{\mathbf L}^{\mathbf J}])$.  

Let us now consider a morphism $f\colon\mathbf L_1\to \mathbf L_2$ of sup-semilattices.  
We have to prove that the following diagram commutes:

\begin{center}
\begin{tikzpicture}

\node (alfa') at (7,5) {$\mathbf J\otimes \mathbf L_1^{\mathbf J}$};
  \node (beta') at (7,2) {$\mathbf L_1$};
   \node (gama') at (12,2) {$\mathbf L_2$};
   \node (delta') at (12,5) {$\mathbf J\otimes\mathbf  L_2^{\mathbf J}$};
   
   \node (1') at (9.5,4.5) {$\mathbf J\otimes f^{\mathbf J}$};
   \node (2') at (9.5,2.5) {$f$};
   \node (3') at (7.7,3.55) {$\varepsilon_{\mathbf L_1}$};
   \node (4') at (11.2,3.55) {$\varepsilon_{\mathbf L_2}$};
   
  \draw [->](alfa') -- (delta');
  \draw [->](beta') -- (gama');
  \draw [->](delta') -- (gama');
  \draw [->](alfa') -- (beta');
 \end{tikzpicture}
 \end{center}

Let $\overline{x}\in ({\mathbf L_1}^{T})^{T}$. We compute:
 $$
 \begin{array}{@{}r@{}c@{\,}l}
(\varepsilon_{\mathbf L_2}\circ  (\mathbf J\otimes f^{\mathbf J}))&(\mbox{\rm n}(j[\mathbf J,{\mathbf L_1}^{\mathbf J}])(\overline{x}))&=
\varepsilon _{\mathbf L_2}(\mbox{\rm n}(j[\mathbf J,{\mathbf L_2}^{\mathbf J}]({f^{\mathbf J}}^{\mathbf J}(\overline{x})))=
e_{\mathbf L_2}({f^{\mathbf J}}^{\mathbf J}(\overline{x}))\\[0.2cm]
&\multicolumn{2}{@{}l}{=\bigvee _{i\in T}(({f^{\mathbf J}}^{\mathbf J}(\overline{x}))(i))(i)=%
\bigvee _{i\in T}({f^{\mathbf J}}(\overline{x}(i)))(i)}\\[0.2cm]
&\multicolumn{2}{@{}l}{=\bigvee _{i\in T}{f}(\overline{x}(i)(i))=f(\bigvee _{i\in T}\overline{x}(i)(i))=%
f(e_{\mathbf L_1}(\overline{x}))}\\[0.2cm]
&\multicolumn{2}{@{}l}{=f((\varepsilon_{\mathbf L_1}\circ %
\mbox{\rm n}(j[\mathbf J,{\mathbf L_1}^{\mathbf J}])(\overline{x}))=%
(f\circ \varepsilon_{\mathbf L_1})%
(\mbox{\rm n}(j[\mathbf J,{\mathbf L_1}^{\mathbf J}])(\overline{x})).}
 \end{array}$$ 

(c): Let  $\mathbf H=(\mathbf G, F)$ be an $F$-sup-semilattice and  $\mathbf L$   a  sup-semilattice.
We will prove the commutativity of following diagrams:

\noindent
\begin{tabular}{@{}c c@{}c}
\begin{tikzpicture}[scale=0.35]

\node (alfa') at (12,0) {$\mathbf  J\otimes \mathbf  H$};
  
   \node (gama') at (0,5) {$\mathbf  J\otimes\mathbf  H$};
   \node (delta') at (12,5) {$\mathbf  J\otimes (\mathbf  J\otimes \mathbf  H)^{\mathbf J}$};
   
   \node (1') at (6,5.7) {$\mathbf  J\otimes \eta_{\mathbf  H}$};
   \node (2') at (13.7,2.7) {$\varepsilon_{\mathbf  J\otimes\mathbf  H}$};
   \node (3') at (6,1.5) {$\mbox{\rm id}_{\mathbf  J\otimes \mathbf  H}$};

  \draw [->](gama') -- (alfa');
  \draw [->](delta') -- (alfa');
  \draw [->](gama') -- (delta');
  
 \end{tikzpicture}
 &&
 \begin{tikzpicture}[scale=0.35]

\node (alfa') at (12,0) {${{\mathbf L}^{\mathbf J}}$};
  
   \node (gama') at (0,5) {${{\mathbf L}^{\mathbf J}}$};
   \node (delta') at (12,5) {$(\mathbf  J\otimes {{\mathbf L}^{\mathbf J}})^{\mathbf  J}$};
   
   \node (1') at (6,5.7) {$\eta _{\mathbf  L^{\mathbf  J}}$};
   \node (2') at (13.7,2.5) {$\varepsilon_{{\mathbf L}^{\mathbf J}}$};
   \node (3') at (6,1.5) {$\mbox{\rm id}_{{\mathbf L}^{\mathbf J}}$};

  \draw [->](gama') -- (alfa');
  \draw [->](delta') -- (alfa');
  \draw [->](gama') -- (delta');
  
 \end{tikzpicture}
\end{tabular}

For the first diagram assume $\overline{x}\in \mathbf G^T$.  From Theorem \ref{Htensor}  
we know that the following diagram commutes (when necessary we forget that some morphisms are actually 
from $\FSUPL$ and we work entirely in {$\mathbb S$}):

\begin{center}
\begin{tikzpicture}[scale=0.6]

\node (alfa') at (7,5) {$\mathbf G^{T}$};
  \node (beta') at (7,0) {$((\mathbf J\otimes \mathbf H)^{T})^{T}$};
   \node (gama') at (15,0) {$\mathbf J\otimes (\mathbf J\otimes \mathbf H)^{\mathbf J}$};
   \node (delta') at (15,5) {$\mathbf J\otimes \mathbf H$};
    \node (deltax') at (22,0) {$\mathbf J\otimes \mathbf H$};
      \node (deltaxz') at (22,-5) {$\mathbf J\otimes \mathbf H$};
   
   \node (1') at (10.8,5.5) {$\mbox{\rm n}(j[\mathbf J,\mathbf H])$};
   \node (2') at (10.82,0.5) {$\mbox{\rm n}(j[\mathbf J, (\mathbf J\otimes \mathbf H)^{\mathbf J}])$};
   \node (3') at (7.85,2.5) {$(\eta _{\mathbf H})^{\mathbf J}$};
   \node (4') at (13.9,2.5) {$\mathbf J\otimes \eta _{\mathbf H}$};
      \node (5') at (18.5,0.5) {$\varepsilon_{\mathbf  J\otimes\mathbf  H}$};
       \node (6') at (20.9,-2.5) {$\mbox{\rm id}_{\mathbf  J\otimes \mathbf  H}$};
        \node (7') at (13.9,-2.85) {$e_{\mathbf J\otimes {\mathbf H}}$};
         \node (8') at (19.9,2.5) {$\mbox{\rm id}_{\mathbf  J\otimes \mathbf  H}$};
   
  \draw [->](alfa') -- (delta');
  \draw [->](beta') -- (gama');
  \draw [->](delta') -- (gama');
  \draw [->](alfa') -- (beta');
    \draw [->](gama') -- (deltax');
     \draw [->](deltax') -- (deltaxz');
        \draw [->](beta') -- (deltaxz');
        \draw[densely dashed,->] (delta') --  (deltax');
 \end{tikzpicture}
 \end{center}

\vskip-0.5cm
We compute:

 $$
 \begin{array}{@{}r@{}l@{}l}
\varepsilon_{\mathbf  J\otimes\mathbf  H}((\mathbf  J\otimes \eta_{\mathbf  H})&(\mbox{\rm n}(j[\mathbf J,{\mathbf G}^{\mathbf J}])(\overline{x})))&=%
(\varepsilon_{\mathbf  J\otimes\mathbf  H}\circ %
\mbox{\rm n}(j[\mathbf J, (\mathbf J\otimes \mathbf H)^{\mathbf J}]))%
((\eta _{\mathbf H})^{\mathbf J}(\overline{x}))\\[0.2cm]
&\multicolumn{2}{@{}l}{=%
e_{\mathbf  J\otimes\mathbf  H}((\eta _{\mathbf H})^{\mathbf J}(\overline{x}))%
=\bigvee_{i\in T} ((\eta _{\mathbf H})^{\mathbf J}(\overline{x})(i))(i)}\\[0.2cm]
&\multicolumn{2}{@{}l}{=%
\bigvee_{i\in T} (\eta _{\mathbf H}(\overline{x}(i)))(i)=%
\bigvee_{i\in T} \mbox{\rm n}(j[\mathbf J,\mathbf H])(\overline{x}(i)_{i=})} \\[0.2cm]
&\multicolumn{2}{@{}l}{=%
\mbox{\rm n}(j[\mathbf J,\mathbf H])(\bigvee_{i\in T} \overline{x}(i)_{i=}) =%
\mbox{\rm n}(j[\mathbf J,\mathbf H])(\overline{x})}.
  \end{array}$$ 

Hence $\varepsilon_{\mathbf  J\otimes\mathbf  H}\circ (\mathbf  J\otimes \eta_{\mathbf  H})=%
\mbox{\rm id}_{\mathbf  J\otimes \mathbf  H}$.

To show the commutativity of the second diagram assume that $\overline{x}\in {\mathbf L}^{T}$ 
and $i\in T$. We compute: 
 $$
 \begin{array}{@{}r@{}l@{}l}
(((\varepsilon _{\mathbf  L})^{\mathbf  J} \circ  \eta_{{\mathbf L}^{\mathbf J}})&(\overline{x}))(i)&=%
((\varepsilon _{\mathbf  L})^{\mathbf  J}(\eta_{{\mathbf L}^{\mathbf J}}(\overline{x})))(i)=%
\varepsilon _{\mathbf  L}(\eta_{{\mathbf L}^{\mathbf J}}(\overline{x})(i))\\[0.2cm]%
&\multicolumn{2}{@{}l}{=%
\varepsilon _{\mathbf  L}(\mbox{\rm n}(j[\mathbf J,{\mathbf L}^{\mathbf J}])(\overline{x}_{i=}))
=e_{{\mathbf L}}(\overline{x}_{i=})=\bigvee_{k\in T} \overline{x}_{i=}(k)(k)=\overline{x}(i).}
  \end{array}$$ 
  
  We conclude that $(\varepsilon _{\mathbf  L})^{\mathbf  J} \circ  \eta_{{\mathbf L}^{\mathbf J}}=%
  \mbox{\rm id}_{{\mathbf  L}^{\mathbf  J}}$.

\end{proof}

\begin{theorem}  Let  $\mathbf H=(\mathbf G, F)$ be an $F$-sup-semilattice. Then:
\begin{enumerate}[\rm(a)]
\item For an arbitrary frame $\mathbf J=(T,S)$, there exists a unique homomorphism of frames 
$\varphi_{\mathbf J}\colon \mathbf J\to {\mathbf J}[\mathbf H, \mathbf J\otimes \mathbf H]$ defined for arbitrary $x\in G$ and $i\in T$ in such a way that 
$$(\varphi_{\mathbf J}(i))(x)=\mbox{\rm n}(j[\mathbf J,{\mathbf H}])(x_{i=}).$$ 
Moreover,  $\varphi=(\varphi_{\mathbf J}\colon %
{\mathbf J}\to {\mathbf J}[\mathbf H, \mathbf J\otimes \mathbf H])_{\mathbf J\in {\mathbb J}}$ 
is a natural transformation  between the identity functor on  $\mathbb J$ and the endofunctor  
${\mathbf J}[\mathbf H, -\otimes \mathbf H]$.

\item For an arbitrary sup-semilattice $\mathbf L$ there exists a unique sup-semilattice morphism 
$\psi_{\mathbf L} \colon {\mathbf J}[\mathbf H, \mathbf L]\otimes {\mathbf H}\to \mathbf L$ 
such that the following diagram commutes:

\begin{center}
\begin{tikzpicture}[scale=0.6]

\node (alfa') at (12,0) {$\mathbf L$};
  
   \node (gama') at (0,5) {${\mathbf G}^{{T}_{[\mathbf H, \mathbf L]}}$};
   \node (delta') at (12,5) {$\mathbf {\mathbf J}[\mathbf H, \mathbf L]\otimes \mathbf H$};
   
   \node (1') at (6,4.3) {$\mbox{\rm n}(j[{\mathbf J}[\mathbf H, \mathbf L],\mathbf H])$};
   \node (2') at (11,2.5) {$\psi_{\mathbf L}$};
   \node (3') at (6,2) {$f_{\mathbf L}$};

  \draw [->](gama') -- (alfa');
  \draw [->](delta') -- (alfa');
  \draw [->](gama') -- (delta');
  
 \end{tikzpicture}
 \end{center}
 
\noindent{}where $f_{\mathbf L}\colon {\mathbf G}^{{T}_{[\mathbf H, \mathbf L]}}\to \mathbf L$ 
 is defined by $f_{\mathbf L}({x})=\bigvee _{\alpha\in {{\mathbf J}[\mathbf H, \mathbf L]}} %
 \alpha({x}(\alpha))$ for any ${x}\in {G}^{{T}_{[\mathbf H, \mathbf L]}}$. Moreover, 
 $\psi=(\psi_{\mathbf L} \colon {\mathbf J}[\mathbf H, %
 \mathbf L]\otimes {\mathbf H}\to \mathbf L)_{\mathbf L\in \mathbb S}$ is a natural transformation 
 between the endofunctor ${\mathbf J}[\mathbf H,  -]\otimes {\mathbf H}$ and the identity functor on  $\mathbb S$.

\item There exists an adjoint situation 
$(\varphi, \psi)\colon (-\otimes \mathbf  H)\dashv {\mathbf  J}[\mathbf  H,-])\colon %
\mathbb S \to \mathbb J$. 
\end{enumerate}

\end{theorem}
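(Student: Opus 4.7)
The plan is to follow the template already established for $(\eta,\varepsilon)$ in Theorem \ref{adjSFSL}, exchanging the role of the functor $(-)^{\mathbf{J}}$ for $\mathbf{J}[\mathbf{H},-]$. In each of the three parts one produces the component of the (co)unit from a natural ``evaluation-type'' map on the ambient product $\mathbf{G}^{T_{[\mathbf{H},\mathbf{L}]}}$, invokes Lemma \ref{lemprenucleus2} to factor it through the appropriate nucleus, and then verifies naturality and the triangle identities by pointwise computation on the generators $x_{i=}$.

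For part (a) I would first check that $\varphi_{\mathbf{J}}(i)\colon \mathbf{G}\to \mathbf{J}\otimes\mathbf{H}$ is a morphism of sup-semilattices: this is immediate because $x\mapsto x_{i=}$ preserves arbitrary joins and the post-composition with the nucleus $\mbox{\rm n}(j[\mathbf{J},\mathbf{H}])$ lands in and preserves joins of $\mathbf{J}\otimes\mathbf{H}$. The frame-homomorphism property reduces, for $(i,k)\in S$, to the inequality
\[
\mbox{\rm n}(j[\mathbf{J},\mathbf{H}])(x_{k=})\;\le\;\mbox{\rm n}(j[\mathbf{J},\mathbf{H}])(F(x)_{i=}),
\]
which holds because $x_{k=}\le x_{iS}$ (as $(i,k)\in S$) and the pair $(x_{iS}\vee F(x)_{i=},F(x)_{i=})$ lies in $[\mathbf{J},\mathbf{H}]$, collapsing $x_{iS}$ and $F(x)_{i=}$ under the nucleus. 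Naturality in $\mathbf{J}$ is then a straightforward square chase using Theorems \ref{Jtensor} and \ref{funSFJrel}.

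For part (b) I would check that $f_{\mathbf{L}}$ preserves arbitrary joins (being a join of sup-preserving evaluations) and then compute on a generator
\[
f_{\mathbf{L}}(x_{\alpha S_{[\mathbf{H},\mathbf{L}]}})\;=\;\bigvee_{\alpha S_{[\mathbf{H},\mathbf{L}]}\beta}\beta(x)\;\le\;\alpha(F(x))\;=\;f_{\mathbf{L}}(F(x)_{\alpha=}),
\]
where the crucial middle inequality is exactly the defining relation of $S_{[\mathbf{H},\mathbf{L}]}$ from Definition \ref{framefromfss}. Consequently $f_{\mathbf{L}}$ identifies the generating pairs of $[\mathbf{J}[\mathbf{H},\mathbf{L}],\mathbf{H}]$, and Lemma \ref{lemprenucleus2} produces the unique sup-semilattice morphism $\psi_{\mathbf{L}}$ with $f_{\mathbf{L}}=\psi_{\mathbf{L}}\circ \mbox{\rm n}(j[\mathbf{J}[\mathbf{H},\mathbf{L}],\mathbf{H}])$. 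Naturality of $\psi$ in $\mathbf{L}$ follows by a diagram chase analogous to that for $\varepsilon$ in Theorem \ref{adjSFSL}(b), again using the universal property given by Lemma \ref{lemprenucleus2}.

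For part (c) I would verify the two triangle identities by evaluation on generators. For $\overline{x}\in \mathbf{G}^{T}$, using Theorem \ref{Jtensor} together with the identity $\psi_{\mathbf{L}}\circ\mbox{\rm n}(j[\mathbf{J}[\mathbf{H},\mathbf{L}],\mathbf{H}])=f_{\mathbf{L}}$, one rewrites $\psi_{\mathbf{J}\otimes\mathbf{H}}\circ(\varphi_{\mathbf{J}}\otimes\mathbf{H})$ applied to $\mbox{\rm n}(j[\mathbf{J},\mathbf{H}])(\overline{x})$ as a join of terms $\varphi_{\mathbf{J}}(i)(\overline{x}(i))=\mbox{\rm n}(j[\mathbf{J},\mathbf{H}])(\overline{x}(i)_{i=})$ over $i\in T$, which collapses to $\mbox{\rm n}(j[\mathbf{J},\mathbf{H}])(\overline{x})$. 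On the other side, for $\alpha\in T_{[\mathbf{H},\mathbf{L}]}$ and $x\in G$ one computes directly
\[
\mathbf{J}[\mathbf{H},\psi_{\mathbf{L}}](\varphi_{\mathbf{J}[\mathbf{H},\mathbf{L}]}(\alpha))(x)=\psi_{\mathbf{L}}(\mbox{\rm n}(j[\mathbf{J}[\mathbf{H},\mathbf{L}],\mathbf{H}])(x_{\alpha=}))=f_{\mathbf{L}}(x_{\alpha=})=\alpha(x).
\]
The main obstacle is notational bookkeeping: in this adjunction the ``index set'' of the tensor $\mathbf{J}[\mathbf{H},\mathbf{L}]\otimes\mathbf{H}$ is the hom-set $T_{[\mathbf{H},\mathbf{L}]}={\mathbb S}(\mathbf{G},\mathbf{L})$, whose elements play the double rôle of coordinate indices and evaluation maps $\mathbf{G}\to\mathbf{L}$; keeping this double rôle straight through the nucleus computations is the only subtle part, after which all verifications are formally parallel to those in Theorem \ref{adjSFSL}.
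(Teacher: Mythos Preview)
Your proposal is correct and follows essentially the same approach as the paper: the same verification that $\varphi_{\mathbf J}(i)$ is a sup-morphism and a frame map via $x_{k=}\le x_{iS}$ and the defining pair in $[\mathbf J,\mathbf H]$, the same computation $f_{\mathbf L}(x_{\alpha S_{[\mathbf H,\mathbf L]}})\le \alpha(F(x))=f_{\mathbf L}(F(x)_{\alpha=})$ feeding into Lemma~\ref{lemprenucleus2}, and the same two triangle computations on generators. One small slip: for the naturality square in part~(a) you want Theorem~\ref{funSJrel} (the covariant functor $\mathbf J[\mathbf H,-]$) rather than Theorem~\ref{funSFJrel} (the contravariant $\mathbf J[-,\mathbf L]$).
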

\begin{proof}
{(a):}  We have to show that our definition is correct. Assume $X\subseteq G$ and $i\in I$. 
We have: 
$$
 \begin{array}{@{}r@{\,}c@{\,}l}
(\varphi_{\mathbf J}(i))\left(\bigvee X\right)&=&%
\mbox{\rm n}(j[\mathbf J,{\mathbf H}])\left(\left(\bigvee X\right)_{i=}\right)=%
\mbox{\rm n}(j[\mathbf J,{\mathbf H}])\left(\bigvee\{x_{i=}\mid x\in X\}\right)\\[0.2cm]
&=&%
\bigvee\{\mbox{\rm n}(j[\mathbf J,{\mathbf H}])\left(x_{i=}\right)\mid x\in X\}=%
\bigvee\{(\varphi_{\mathbf J}(i))\left(x\right)\mid x\in X\}
\end{array}
$$
Hence $\varphi_{\mathbf J}(i)\in {T}_{[\mathbf H, \mathbf J\otimes \mathbf H]}$. 
Now, let $i,k\in T$ such that $i\mathrel{S}k$ and $x\in G$. We compute:
$$\begin{array}{r c l}
(\varphi_{\mathbf J}(k))(x)&=&\mbox{\rm n}(j[\mathbf J,{\mathbf H}])(x_{k=})%
\leq \mbox{\rm n}(j[\mathbf J,{\mathbf H}])(x_{iS})\leq %
\mbox{\rm n}(j[\mathbf J,{\mathbf H}])((F(x))_{i=})\\[0.1cm]
&=&(\varphi_{\mathbf J}(i))(F(x)).
\end{array}$$ 
Therefore $\varphi_{\mathbf J}(i) %
\mathrel{S_{[\mathbf H, \mathbf J\otimes \mathbf H]}}\varphi_{\mathbf J}(k)$.\\

Let  $t\colon{}\mathbf J_1\to \mathbf J_2$ be a homomorphism of frames. We have to show that the following diagram commutes:
\begin{center}
\begin{tikzpicture}
\node (alfa') at (7,5) {$\mathbf J_1$};
  \node (beta') at (7,2) {${\mathbf J}[\mathbf  H,\mathbf J_1\otimes \mathbf H]$};
   \node (gama') at (13,2) {${\mathbf J}[\mathbf H, \mathbf J_2\otimes \mathbf H]$};
   \node (delta') at (13,5) {$\mathbf J_2$};
   
   \node (1') at (9.5,4.5) {$t$};
   \node (2') at (10.05,2.5) {${\mathbf J}[\mathbf H,t\otimes \mathbf H]$};
   \node (3') at (7.5483,3.55) {$\varphi_{\mathbf J_1}$};
   \node (4') at (12.42,3.55) {$\varphi_{\mathbf J_2}$};
   
  \draw [->](alfa') -- (delta');
  \draw [->](beta') -- (gama');
  \draw [->](delta') -- (gama');
  \draw [->](alfa') -- (beta');
 \end{tikzpicture}
\end{center}

Let $i\in J$ and $x\in G$. We obtain 
$$
 \begin{array}{@{}r@{}l@{}l}
(({\mathbf J}[\mathbf H,t\otimes \mathbf H]\circ \varphi_{\mathbf J_1})&(i))(x)&%
=\left({\mathbf J}[\mathbf  H, t\otimes \mathbf H](\varphi_{\mathbf J_1}(i))\right)(x)
\\[0.2cm]
&\multicolumn{2}{@{}l}{=\left((t\otimes \mathbf H)\circ (\varphi_{\mathbf J_1}(i))\right)(x)%
=\left(t\otimes \mathbf H\right)\left((\varphi_{\mathbf J_1}(i))(x)\right)}\\[0.2cm]
&\multicolumn{2}{@{}l}{=%
\left(t\otimes \mathbf H\right)\left(\mbox{\rm n}(j[\mathbf J_1,{\mathbf H}])(x_{i=})\right)%
=\mbox{\rm n}(j[\mathbf J_2,{\mathbf H}])(x_{t(i)=})} \\[0.2cm]
&\multicolumn{2}{@{}l}{=\left(\varphi_{\mathbf J_2}(t(i))\right)(x)
=\left((\varphi_{\mathbf J_2}\circ t)(i)\right)(x).}
\end{array}$$

{(b):}   It is transparent that $f_{\mathbf L}$ preserves arbitrary joins. 


Let $x\in G$ and $\alpha \in {\mathbf J}[\mathbf H,\mathbf L]$ be arbitrary. We compute:
$$
\begin{array}{@{}r@{\,}l@{\,}l}
e_{\mathbf L}(x_{\alpha S_{[\mathbf H,\mathbf L]}})&=&%
\bigvee \{ \beta(x_{\alpha S_{[\mathbf H,\mathbf L]}}(\beta)) \mid {\beta\in T_{[\mathbf H,\mathbf L]}}\}\\[0.2cm]%
&=&\bigvee \{\beta (x)\mid \alpha \mathrel{S_{[\mathbf H,\mathbf L]}} \beta, {\beta\in T_{[\mathbf H,\mathbf L]}}\}%
\leq \alpha (F(x))\\[0.2cm]%
&=&\bigvee \{\beta(F(x)_{\alpha=}(\beta)) \mid {\beta\in T_{[\mathbf H,\mathbf L]}}\}=%
e_{\mathbf L}(F(x)_{\alpha=}).
\end{array}
$$ 

Therefore $e_{\mathbf L}(x_{\alpha S^{\bullet}}\vee F(x)_{\alpha=})=e_{\mathbf L}(F(x)_{\alpha=})$, which assures 
by  Lemma \ref{lemprenucleus2} the existence of $\psi_{\mathbf L}$ from the theorem. 

Let  $g\colon \mathbf L_1\to \mathbf L_2$  be a morphism of sup-semilattices. Let us 
show that the following diagram commutes:
\begin{center}
\begin{tikzpicture}[scale=0.68049]
\node (alfa') at (7,5) {${\mathbf J}[\mathbf H, \mathbf L_1]\otimes \mathbf H$};
  \node (beta') at (7,0) {$\mathbf L_1$};
   \node (gama') at (14,0) {$\mathbf L_2$};
   \node (delta') at (14,5) {${\mathbf J}[\mathbf H,\mathbf L_2]\otimes \mathbf H$};
   \node (epsilon') at (3,8) {$\mathbf G^{T_{[\mathbf H,\mathbf L_1]}}$};
   \node (mu') at (18,8) {$\mathbf G^{T_{[\mathbf H,\mathbf L_2]}}$};
   
   \node (1') at (10.5,5.5) {${\mathbf J}[\mathbf H,g]\otimes \mathbf H$};
   \node (2') at (10.5,0.5) {$g$};
   \node (3') at (7.6,2.5) {$\psi_{{\mathbf L}_1}$};
   \node (4') at (13.3,2.5) {$\psi_{\mathbf L_2}$};
    \node (5') at (7.098595,6.6510595) {$\mbox{\rm n}(j[\mathbf J[\mathbf H, \mathbf L_1],{\mathbf H}])$};
       \node (6') at (5.09876,2.5) {$f_{{\mathbf L}_1}$};
    \node (7') at (13.8595,6.6510595) {$\mbox{\rm n}(j[\mathbf J[\mathbf H, \mathbf L_2],{\mathbf H}])$};
           \node (8') at (15.876509876,2.5) {$f_{{\mathbf L}_2}$};
        \node (9') at (10.5,8.5) {${\mathbf J}[\mathbf H,g]^{\rightarrow}$};      
   
  \draw [->](alfa') -- (delta');
  \draw [->](beta') -- (gama');
  \draw [->](delta') -- (gama');
  \draw [->](alfa') -- (beta');
   \draw [->](epsilon') -- (alfa');
      \draw [->](epsilon') -- (beta');
       \draw [->](mu') -- (delta');
   \draw [->](mu') -- (gama');
    \draw [->](epsilon') -- (mu');
 \end{tikzpicture}
 \end{center}

Let $x\in G^{T_{[\mathbf H,\mathbf L_1]}}$. We compute: 
$$
\begin{array}{@{}r@{\,}c@{\,}l}
(\psi_{\mathbf L_2}&\circ& ({\mathbf J}[\mathbf H,g]\otimes \mathbf H)\circ %
\mbox{\rm n}(j[\mathbf J[\mathbf H, \mathbf L_1],{\mathbf H}]))(x)\\[0.2cm]
&=&  %
(\psi_{\mathbf L_2}\circ \mbox{\rm n}(j[\mathbf J[\mathbf H, \mathbf L_2],{\mathbf H}])\circ %
{\mathbf J}[\mathbf H,g]^{\rightarrow})(x) =%
(f_{{\mathbf L}_2}\circ {\mathbf J}[\mathbf H,g]^{\rightarrow})(x)\\[0.2cm]
&=&%
f_{{\mathbf L}_2}((\bigvee\{x(\alpha) \mid g\circ \alpha=\beta, %
\alpha \in T_{[\mathbf H,\mathbf L_1]}\})_{\beta \in T_{[\mathbf H,\mathbf L_2]}})\\[0.2cm]
&=&\bigvee _{\beta \in T_{[\mathbf H,\mathbf L_2]}} %
 \beta((\bigvee\{x(\alpha) \mid g\circ \alpha=\beta, %
\alpha \in T_{[\mathbf H,\mathbf L_1]}\}))\\[0.2cm]
&=&
 \bigvee\{g(\alpha(x(\alpha))) \mid  %
\alpha \in T_{[\mathbf H,\mathbf L_1]}\}=
g(\bigvee\{\alpha(x(\alpha)) \mid  \alpha \in T_{[\mathbf H,\mathbf L_1]}\})\\[0.2cm]
&=&(g\circ f_{{\mathbf L}_1})(x)=(g\circ %
\psi_{\mathbf L_1}\circ \mbox{\rm n}(j[\mathbf J[\mathbf H, \mathbf L_1],{\mathbf H}])(x).
\end{array}
$$ 
Since $\mbox{\rm n}(j[\mathbf J[\mathbf H, \mathbf L_1],{\mathbf H}])$ is surjective 
we have 
$\psi_{\mathbf L_2}\circ ({\mathbf J}[\mathbf H,g]\otimes \mathbf H)=%
g\circ \psi_{\mathbf L_1}$.

(c): Let  $\mathbf J=(S, T)$ be a frame and  $\mathbf L$   a  sup-semilattice.
We will prove the commutativity of following diagrams:

\noindent{}%
\begin{tabular}{@{}c c@{}c}
\begin{tikzpicture}[scale=0.3]

\node (alfa') at (12,0) {$\mathbf  J\otimes \mathbf  H$};
  
   \node (gama') at (0,5) {$\mathbf  J\otimes\mathbf  H$};
   \node (delta') at (12,5) {${\mathbf J}[\mathbf H, \mathbf  J\otimes\mathbf  H]\otimes \mathbf H$};
   
   \node (1') at (5.5,5.7) {$\varphi_{\mathbf  J}\otimes {\mathbf  H}$};
   \node (2') at (13.7,2.7) {$\psi_{\mathbf  J\otimes\mathbf  H}$};
   \node (3') at (6,1.5) {$\mbox{\rm id}_{\mathbf  J\otimes \mathbf  H}$};

  \draw [->](gama') -- (alfa');
  \draw [->](delta') -- (alfa');
  \draw [->](gama') -- (delta');
  
 \end{tikzpicture}
 &&
 \begin{tikzpicture}[scale=0.3]

\node (alfa') at (12,0) {${\mathbf J}[\mathbf H, {\mathbf L}]$};
  
   \node (gama') at (0,5) {${\mathbf J}[\mathbf H, {\mathbf L}]$};
   \node (delta') at (12,5) {${\mathbf J}[\mathbf H, {\mathbf J}[\mathbf H, \mathbf  L]\otimes \mathbf H]$};
   
   \node (1') at (5.6,5.7) {$\varphi_{{\mathbf J}[\mathbf H, {\mathbf L}]}$};
   \node (2') at (14.1099,2.5) {${{\mathbf J}[\mathbf H, \psi_{\mathbf L}]}{}$};
   \node (3') at (6,1.5) {$\mbox{\rm id}_{{\mathbf J}[\mathbf H, {\mathbf L}]}$};

  \draw [->](gama') -- (alfa');
  \draw [->](delta') -- (alfa');
  \draw [->](gama') -- (delta');
  
 \end{tikzpicture}
\end{tabular}

Let $\overline{x}\in \mathbf G^T$.  According to  Theorem \ref{Jtensor}
we know that the following diagram commutes:

\begin{center}
\begin{tikzpicture}[scale=0.62347]
\node (alfa') at (6,5) {$\mathbf G^{T}$};
  \node (beta') at (6,0) {$\mathbf G^{T_{[\mathbf H, \mathbf J\otimes \mathbf H]}}$};
   \node (gama') at (15,0) {${\mathbf J}[\mathbf H, \mathbf J\otimes \mathbf H]\otimes \mathbf H$};
   \node (delta') at (15,5) {$\mathbf J\otimes \mathbf H$};
    \node (deltax') at (22,0) {$\mathbf J\otimes \mathbf H$};
      \node (deltaxz') at (22,-3.5) {$\mathbf J\otimes \mathbf H$};
   
   \node (1') at (10.5,5.5) {$\mbox{\rm n}(j[\mathbf J,\mathbf H])$};
   \node (2') at (10,0.5) {$\mbox{\rm n}(j[{\mathbf J}[\mathbf H, \mathbf J\otimes \mathbf H], %
   \mathbf H])$};
   \node (3') at (6.85,2.5) {$\varphi _{\mathbf J}^{\rightarrow}$};
   \node (4') at (13.9,2.5) {$\varphi _{\mathbf J}\otimes\mathbf  H$};
      \node (5') at (18.5,0.5) {$\psi _{\mathbf J\otimes\mathbf  H}$};
       \node (6') at (20.9,-1.5) {$\mbox{\rm id}_{\mathbf  J\otimes \mathbf  H}$};
        \node (7') at (13.59,-2.85) {$f_{\mathbf J\otimes {\mathbf H}}$};
         \node (8') at (19.9,2.5) {$\mbox{\rm id}_{\mathbf  J\otimes \mathbf  H}$};
   
  \draw [->](alfa') -- (delta');
  \draw [->](beta') -- (gama');
  \draw [->](delta') -- (gama');
  \draw [->](alfa') -- (beta');
    \draw [->](gama') -- (deltax');
     \draw [->](deltax') -- (deltaxz');
        \draw [->](beta') -- (deltaxz');
        \draw[densely dashed,->] (delta') --  (deltax');
 \end{tikzpicture}
 \end{center}
\vskip-0.3cm

We compute:

$$
\begin{array}{@{}r@{\,}c@{\,}l}
(&\psi_{\mathbf J\otimes\mathbf  H}&\circ (\varphi _{\mathbf J}\otimes\mathbf  H)\circ %
\mbox{\rm n}(j[\mathbf J,\mathbf H]))(x)=
(\psi _{\mathbf J\otimes\mathbf  H}\circ%
\mbox{\rm n}(j[{\mathbf J}[\mathbf H, \mathbf J\otimes \mathbf H], %
   \mathbf H])\circ %
\varphi _{\mathbf J}^{\rightarrow})(x)\\[0.2cm]
&\multicolumn{2}{@{}l}{=(f_{\mathbf J\otimes {\mathbf H}}\circ %
\varphi _{\mathbf J}^{\rightarrow})(x)=%
f_{\mathbf J\otimes {\mathbf H}}%
\left(\bigvee\{x(i)\mid \varphi_{\mathbf J}(i)=\alpha\})_{\alpha\in T_{[\mathbf H, %
\mathbf J\otimes \mathbf H]}}\right)}\\[0.2cm]
&\multicolumn{2}{@{}l}{=\bigvee_{\alpha\in T_{[\mathbf H, %
\mathbf J\otimes \mathbf H]}}%
\alpha(\bigvee\{x(i)\mid \varphi_{\mathbf J}(i)=\alpha\})%
=\bigvee_{\alpha\in T_{[\mathbf H, %
\mathbf J\otimes \mathbf H]}}%
\bigvee_{\varphi_{\mathbf J}(i)=\alpha, i\in T}\alpha(x(i))}\\[0.2cm]
&\multicolumn{2}{@{}l}{=\bigvee\{(\varphi_{\mathbf J}(i))(x(i)) \mid 
{i\in T}\}=%
\bigvee\{\mbox{\rm n}(j[\mathbf J,{\mathbf H}])(x(i)_{i=}) \mid 
{ i\in T}\}}\\[0.2cm]
&\multicolumn{2}{@{}l}{=\mbox{\rm n}(j[\mathbf J,{\mathbf H}])(\bigvee\{ x(i)_{i=} \mid  i\in T\})=%
\mbox{\rm n}(j[\mathbf J,{\mathbf H}])(x).}
\end{array}
$$

Hence the first diagram commutes. Now, let  $\alpha \in {\mathbf J}[\mathbf H,\mathbf L]$ and  $x\in G$. 
We obtain: 
$$
\begin{array}{@{}r@{\,}l@{\,}l}
(({{\mathbf J}[\mathbf H, \psi_{\mathbf L}]}{} &\circ \varphi_{{\mathbf J}[\mathbf H, {\mathbf L}]})(\alpha))(x)&=%
({{\mathbf J}[\mathbf H, \psi_{\mathbf L}]}{}(\varphi_{{\mathbf J}[\mathbf H, {\mathbf L}]}(\alpha))(x) %
\\[0.2cm]
&\multicolumn{2}{@{}l}{%
=\psi_{\mathbf L}((\varphi_{{\mathbf J}[\mathbf H, {\mathbf L}]}(\alpha))(x))=\psi_{\mathbf L}(\mbox{\rm n}(j[\mathbf J[\mathbf H, {\mathbf L}],{\mathbf H}])(x_{\alpha=}))=%
f_{\mathbf L}(x_{\alpha=})}\\[0.2cm]
&\multicolumn{2}{@{}l}{=\bigvee\{\beta(x_{\alpha=}(\beta)) \mid \beta\in T_{[\mathbf H, {\mathbf L}]}\}=\alpha (x)}
\end{array}$$
which yields the commutativity of the second diagram.
\end{proof}

\begin{theorem}\label{natsupsem}
Let $\mathbf L$ be a sup-semilattice. Then the following holds:
\begin{enumerate}[\rm(a)]
\item For an arbitrary frame $\mathbf J=(T,S)$, there exists a unique homomorphism of frames 
$\nu_{\mathbf J}\colon \mathbf J\to {\mathbf J}[{\mathbf L}^{\mathbf J}, \mathbf L]$ defined for 
arbitrary $x\in L^T$ and $i\in T$ in such a way that 
$$(\nu_{\mathbf J}(i))(x)=x({i}).$$ 
Moreover,  $\nu=(\nu_{\mathbf J}\colon %
{\mathbf J}\to {\mathbf J}[{\mathbf L}^{\mathbf J}, \mathbf L])_{\mathbf J\in {\mathbb J}}$ 
is a natural transformation  between the identity functor on  $\mathbb J$ and the endofunctor 
${\mathbf J}[{\mathbf L}^{-}, \mathbf L]$.

\item  For an arbitrary $F$-sup-semilattice $\mathbf H=(\mathbf G, F)$ there exists a lax morphism 
$\mu_{\mathbf H}\colon {\mathbf H}\to %
\mathbf L^{\mathbf J[\mathbf H, \mathbf L]}$ 
of $F$-sup-semilattices defined for arbitrary $x\in G$ and $\alpha\in T_{\mathbf J[\mathbf H, \mathbf L]}$ 
by  
$$(\mu_{\mathbf H}(x))(\alpha)=\alpha(x).$$ 
Moreover, 
$\mu=(\mu_{\mathbf H}\colon {\mathbf H}\to %
\mathbf L^{\mathbf J[\mathbf H, \mathbf L]})_{{\mathbf H}\in \smFSUPL}$ 
is a natural transformation  between the identity functor on  ${\FSUPL}$ and the endofunctor 
$\mathbf L^{\mathbf J[-, \mathbf L]}$. 

\item There exists an adjoint situation 
$(\nu, \mu)\colon {\mathbf  J}[-, \mathbf  L])\dashv {\mathbf  L}^{-}\colon %
\mathbb J \to {\FSUPL}^{op}$.

\end{enumerate}
\end{theorem}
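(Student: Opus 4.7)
The proof follows the template set by the two previous adjoint situations, but is in fact simpler because neither construction requires passing to a quotient by a nucleus and both families $\nu$ and $\mu$ are given by the same evaluation pairing $(i,x) \mapsto x(i)$ and $(x,\alpha)\mapsto \alpha(x)$. My plan is to verify (a) and (b) by direct pointwise computation, then deduce (c) from the two triangle identities which turn out to be tautological.

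For (a), I first check that $\nu_{\mathbf J}(i)$ really lies in $T_{[{\mathbf L}^{\mathbf J},\mathbf L]}$, i.e.\ preserves arbitrary joins; this is immediate from the pointwise definition of joins in $\mathbf L^{T}$. The frame-homomorphism property is the observation that $i\mathrel{S}k$ forces $\nu_{\mathbf J}(k)(x) = x(k) \leq \bigvee\{x(k')\mid (i,k')\in S\} = (F^{\mathbf J}(x))(i) = \nu_{\mathbf J}(i)(F^{\mathbf J}(x))$ for every $x\in L^{T}$, which is exactly the condition $\nu_{\mathbf J}(i)\mathrel{S_{[{\mathbf L}^{\mathbf J},\mathbf L]}}\nu_{\mathbf J}(k)$. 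Uniqueness is forced by the prescription, and naturality in $\mathbf J$ is a one-line pointwise check: for a frame homomorphism $t\colon \mathbf J_1\to \mathbf J_2$, $i\in T_1$, and $x\in L^{T_2}$, both $(\mathbf J[\mathbf L^{t},\mathbf L]\circ \nu_{\mathbf J_1})(i)(x)$ and $(\nu_{\mathbf J_2}\circ t)(i)(x)$ evaluate to $x(t(i))$.

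For (b), join-preservation of $\mu_{\mathbf H}$ is again pointwise: $\mu_{\mathbf H}(\bigvee X)(\alpha) = \alpha(\bigvee X) = \bigvee_{x\in X}\alpha(x)$. The crucial point is the lax inequality $F^{\mathbf J[\mathbf H,\mathbf L]}\circ \mu_{\mathbf H}\leq \mu_{\mathbf H}\circ F$, which unpacks to $\bigvee\{\beta(x)\mid \alpha\mathrel{S_{[\mathbf H,\mathbf L]}}\beta\} \leq \alpha(F(x))$; but this is precisely the defining inequality of $S_{[\mathbf H,\mathbf L]}$ in Definition \ref{framefromfss}. Naturality in $\mathbf H$ for a lax morphism $f\colon \mathbf H_1\to \mathbf H_2$ reduces to $(\mathbf L^{\mathbf J[f,\mathbf L]}\circ \mu_{\mathbf H_1})(x)(\alpha) = \mu_{\mathbf H_1}(x)(\alpha\circ f) = \alpha(f(x)) = \mu_{\mathbf H_2}(f(x))(\alpha)$.

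Part (c) then amounts to verifying the two triangle identities appropriate to a dual adjunction $\mathbb J\to \FSUPL^{op}$, namely $\mathbf L^{\nu_{\mathbf J}}\circ \mu_{\mathbf L^{\mathbf J}} = \mbox{\rm id}_{\mathbf L^{\mathbf J}}$ and $\mathbf J[\mu_{\mathbf H},\mathbf L]\circ \nu_{\mathbf J[\mathbf H,\mathbf L]} = \mbox{\rm id}_{\mathbf J[\mathbf H,\mathbf L]}$. Both collapse tautologically: for $x\in L^{T}$ and $i\in T$, $\mathbf L^{\nu_{\mathbf J}}(\mu_{\mathbf L^{\mathbf J}}(x))(i) = \mu_{\mathbf L^{\mathbf J}}(x)(\nu_{\mathbf J}(i)) = \nu_{\mathbf J}(i)(x) = x(i)$, and symmetrically $\mathbf J[\mu_{\mathbf H},\mathbf L](\nu_{\mathbf J[\mathbf H,\mathbf L]}(\alpha))(x) = \nu_{\mathbf J[\mathbf H,\mathbf L]}(\alpha)(\mu_{\mathbf H}(x)) = \mu_{\mathbf H}(x)(\alpha) = \alpha(x)$. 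The main obstacle, insofar as there is one, is purely the bookkeeping of keeping track of which morphisms live in which category and in which direction; conceptually the result is a routine dual adjunction built from the evaluation pairing, and the lax condition in (b) matches the definition of $S_{[\mathbf H,\mathbf L]}$ exactly by design, so no surprising verifications arise.
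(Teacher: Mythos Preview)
Your proposal is correct and follows essentially the same approach as the paper: the same pointwise verifications that $\nu_{\mathbf J}$ is a frame homomorphism, that $\mu_{\mathbf H}$ is a lax morphism (via the defining inequality of $S_{[\mathbf H,\mathbf L]}$), the same naturality squares, and the same two triangle identities, all reduced to the evaluation pairing. The only addition you make is the explicit check that $\nu_{\mathbf J}(i)$ preserves joins, which the paper leaves implicit.
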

\begin{proof}
{(a):} Let $i, k\in T$, $x\in L^{T}$ and $i\mathrel{S} k$. We compute:
$$
(\nu_{\mathbf J}(k))(x)=x({k})\leq \bigvee\{ x(l)\mid i\mathrel{S} l, l\in T\}=%
(F^{\mathbf J}(x))(i)=(\nu_{\mathbf J}(i))(F^{\mathbf J}(x)).
$$
Hence $\nu_{\mathbf J}(i) \mathrel{S_{[{\mathbf L}^{\mathbf J}, \mathbf L]}} \nu_{\mathbf J}(k)$ 
and $\nu_{\mathbf J}$ is a frame homomorphism. 
Assume now that $t\colon \mathbf J_1 \to \mathbf J_2$ is a frame homomorphism between 
frames $\mathbf J_1=(T_1,S_1)$ and $\mathbf J_2=(T_2,S_2)$. We have to show 
that the following diagram commutes:
\begin{center}
\begin{tikzpicture}

\node (alfa') at (7,5) {$\mathbf J_1$};
  \node (beta') at (7,2) {${\mathbf J}[{\mathbf L}^{\mathbf J_1}, \mathbf L]$};
   \node (gama') at (13,2) {${\mathbf J}[{\mathbf L}^{\mathbf J_2}, \mathbf L]$};
   \node (delta') at (13,5) {$\mathbf J_2$};
   
   \node (1') at (9.5,4.5) {$t$};
   \node (2') at (10.05,2.5) {${\mathbf J}[{\mathbf L}^{t},  \mathbf L]$};
   \node (3') at (7.5483,3.55) {$\nu_{\mathbf J_1}$};
   \node (4') at (12.42,3.55) {$\nu_{\mathbf J_2}$};
   
  \draw [->](alfa') -- (delta');
  \draw [->](beta') -- (gama');
  \draw [->](delta') -- (gama');
  \draw [->](alfa') -- (beta');
 \end{tikzpicture}
\end{center}

Assume that $i\in T_1$ and $x\in  L^{T_2}$. We compute:
$$
\begin{array}{@{}r@{\,}c@{\,}l}
\left(({\mathbf J}[{\mathbf L}^{t},  \mathbf L]\circ \nu_{\mathbf J_1})(i)\right)(x)&=&%
({\mathbf J}[{\mathbf L}^{t},  \mathbf L]( \nu_{\mathbf J_1}(i))(x)=%
( \nu_{\mathbf J_1}(i)\circ {\mathbf L}^{t})(x)=%
\nu_{\mathbf J_1}(i)({\mathbf L}^{t}(x))\\[0.2cm]
&=&%
\nu_{\mathbf J_1}(i)(x\circ t)=(x\circ t)(i)=x(t(i))=%
 (\nu_{\mathbf J_2}(t(i))(x)\\[0.2cm]
&=&%
\left((\nu_{\mathbf J_2}\circ t)(i)\right)(x).\\
\end{array}
$$
Hence ${\mathbf J}[{\mathbf L}^{t},  \mathbf L]\circ \nu_{\mathbf J_1}=%
\nu_{\mathbf J_2}\circ t$.

{(b):}  Evidently, $\mu_{\mathbf H}$ preserves arbitrary joins. We have to verify that 
$F^{{\mathbf J[\mathbf H, \mathbf L]}}\circ \mu_{\mathbf H}\leq \mu_{\mathbf H}\circ F$. 
Let $x\in G$ and $\alpha\in {{\mathbf J[\mathbf H, \mathbf L]}}$. We compute: 
$$
\begin{array}{@{}r@{\,}c@{\,}l}
\left((F^{{\mathbf J[\mathbf H, \mathbf L]}}\circ \mu_{\mathbf H})(x)\right)(\alpha)&=&
(F^{{\mathbf J[\mathbf H, \mathbf L]}}(\mu_{\mathbf H}(x)))(\alpha)\\[0.2cm]
&=&%
\bigvee\{(\mu_{\mathbf H}(x))(\beta) \mid \beta\in {{\mathbf J[\mathbf H, \mathbf L]}}, 
\alpha \mathrel{S_{[\mathbf H, \mathbf L]}} \beta\}\\[0.2cm]
&=&%
\bigvee\{\beta(x) \mid \beta\in {{\mathbf J[\mathbf H, \mathbf L]}}, 
\alpha \mathrel{S_{[\mathbf H, \mathbf L]}} \beta\}\\[0.2cm]
&\leq&%
\alpha(F(x))=\mu_{\mathbf H}(F(x))(\alpha)=%
\left((\mu_{\mathbf H}\circ F) (x)\right)(\alpha).
\end{array}
$$

Therefore is $\mu_{\mathbf H}$ a lax morphism. 

Now, let us assume that 
$\mathbf H_1=(\mathbf G_1, F_1)$ and  %
$\mathbf H_2=(\mathbf G_2, F_2)$ are $F$-sup-semilattices, and that 
$f\colon \mathbf H_1\to \mathbf  H_2$ is a  lax morphism of $F$-sup-semilattices. 
We have to verify that the following diagram commutes:
\begin{center}
\begin{tikzpicture}

\node (alfa') at (7,5) {$\mathbf H_1$};
  \node (beta') at (7,2) {$\mathbf L^{\mathbf J[\mathbf H_1, \mathbf L]}$};
   \node (gama') at (12,2) {$\mathbf L^{\mathbf J[\mathbf H_2, \mathbf L]}$};
   \node (delta') at (12,5) {$\mathbf H_2$};
   
   \node (1') at (9.5,4.5) {$f$};
   \node (2') at (9.5,2.5) {$\mathbf L^{\mathbf J[f, \mathbf L]}$};
   \node (3') at (7.5,3.55) {$\mu_{\mathbf H_1}$};
   \node (4') at (11.5,3.55) {$\mu_{\mathbf H_2}$};
   
  \draw [->](alfa') -- (delta');
  \draw [->](beta') -- (gama');
  \draw [->](delta') -- (gama');
  \draw [->](alfa') -- (beta');
 \end{tikzpicture}
 \end{center}
 Assume that $x\in G_1$ and $\alpha\in T_{[\mathbf H_2, \mathbf L]}$. We compute: 
 
 $$
\begin{array}{@{}r@{}l@{}l}
\left((\mathbf L^{\mathbf J[f, \mathbf L]} \right.&\left.\circ \mu_{\mathbf H_1})(x)\right)(\alpha)&=%
\left(\mathbf L^{\mathbf J[f, \mathbf L]} (\mu_{\mathbf H_1}(x))\right)(\alpha)\\[0.2cm]
&\multicolumn{2}{@{}l}{=%
\left(\mu_{\mathbf H_1}(x) \right)({\mathbf J[f, \mathbf L](\alpha)})=%
({\mathbf J[f, \mathbf L](\alpha)})(x)%
=(\alpha\circ f)(x)}\\[0.2cm]
&\multicolumn{2}{@{}l}{=%
\alpha(f(x))=\left((\mu_{\mathbf H_2}\circ f)(x)\right)(\alpha).}%
\end{array}
$$
hence $\mathbf L^{\mathbf J[f, \mathbf L]} \circ \mu_{\mathbf H_1}=\mu_{\mathbf H_2}\circ f$. 
\end{proof}

 (c): Let  $\mathbf J=(S, T)$ be a frame and  $\mathbf H=(\mathbf G, F)$ an $F$-sup-semilattice.
We will prove the commutativity of following diagrams:

\begin{center}
\begin{tabular}{@{}c c c}
\begin{tikzpicture}[scale=0.3]

\node (alfa') at (12,0) {$\mathbf  L^{\mathbf  J}$};
  
   \node (gama') at (0,5) {$\mathbf  L^{\mathbf  J}$};
   \node (delta') at (12,5) {${\mathbf L}^{{\mathbf J}%
   [\mathbf L^{\mathbf J}, \mathbf  L]}$};
   
   \node (1') at (5.5,5.7) {$\mu_{{\mathbf  L}^{\mathbf  J}}$};
   \node (2') at (13.7,2.7) {${\mathbf  L}^{\nu_{\mathbf  J}}$};
   \node (3') at (6,1.5) {$\mbox{\rm id}_{\mathbf  L^{\mathbf  J}}$};

  \draw [->](gama') -- (alfa');
  \draw [->](delta') -- (alfa');
  \draw [->](gama') -- (delta');
  
 \end{tikzpicture}
 &\phantom{xx}&
 \begin{tikzpicture}[scale=0.3]

\node (alfa') at (12,0) {${\mathbf J}[\mathbf H, {\mathbf L}]$};
  
   \node (gama') at (0,5) {${\mathbf J}[\mathbf H, {\mathbf L}]$};
   \node (delta') at (12,5) {${\mathbf J}[\mathbf L^{{\mathbf J}[\mathbf H, \mathbf  L]}, \mathbf L]$};
   
   \node (1') at (5.6,5.7) {$\nu_{{\mathbf J}[\mathbf H, {\mathbf L}]}$};
   \node (2') at (14.3099,2.5) {${{\mathbf J}[\mu_{\mathbf H}, {\mathbf L}]}{}$};
   \node (3') at (5.2469,1.5) {$\mbox{\rm id}_{{\mathbf J}[\mathbf H, {\mathbf L}]}$};

  \draw [->](gama') -- (alfa');
  \draw [->](delta') -- (alfa');
  \draw [->](gama') -- (delta');
  
 \end{tikzpicture}
\end{tabular}
\end{center}

Let $x\in L^{T}$ and $i\in T$. We compute: 
 $$
\begin{array}{@{}r@{\,}l@{\,}l}
\left(({\mathbf  L}^{\nu_{\mathbf  J}} \circ \mu_{{\mathbf  L}^{\mathbf  J}})(x)\right)(i)&=&%
\left({\mathbf  L}^{\nu_{\mathbf  J}} (\mu_{{\mathbf  L}^{\mathbf  J}}(x))\right)(i)=%
(\mu_{{\mathbf  L}^{\mathbf  J}}(x))\left({\nu_{\mathbf  J}} (i)\right)=%
\left({\nu_{\mathbf  J}} (i)\right)(x)\\[0.2cm]
&=&%
x(i)=(\mbox{\rm id}_{\mathbf  L^{\mathbf  J}}(x))(i).
\end{array}
$$
Hence ${\mathbf  L}^{\nu_{\mathbf  J}} \circ \mu_{{\mathbf  L}^{\mathbf  J}}=%
\mbox{\rm id}_{\mathbf  L^{\mathbf  J}}$ in $\FSUPL$. Assume that $x\in G$ and $\alpha\in T_{[\mathbf H, \mathbf L]}$. We compute: 
 $$
\begin{array}{@{}r@{\,}c@{\,}l}
\left(({{\mathbf J}[\mu_{\mathbf H}, {\mathbf L}]}{}\right. &\left.\circ\, \, %
\nu_{{\mathbf J}[\mathbf H, {\mathbf L}]})(\alpha)\right)(x)&=%
\left({{\mathbf J}[\mu_{\mathbf H}, {\mathbf L}]}{} (\nu_{{\mathbf J}[\mathbf H, {\mathbf L}]}(\alpha))\right)(x)
\\[0.2cm]
&\multicolumn{2}{@{}l}{=%
(\nu_{{\mathbf J}[\mathbf H, {\mathbf L}]}(\alpha))\left(\mu_{\mathbf H}(x)\right)%
=\left(\mu_{\mathbf H}(x)\right)(\alpha)=%
\alpha(x)=(\mbox{\rm id}_{{\mathbf J}[{\mathbf H}, {\mathbf L}]}(\alpha))(x).}
\end{array}
$$
Therefore ${{\mathbf J}[\mu_{\mathbf H}, {\mathbf L}]}{} \circ %
\nu_{{\mathbf J}[\mathbf H, {\mathbf L}]}=\mbox{\rm id}_{{\mathbf J}[{\mathbf H}, {\mathbf L}]}$. 

\begin{remark}
Our induced adjoint situations $(\eta, \varepsilon), (\varphi, \psi)$,  and $(\nu, \mu)$ 
evidently restrict to an adjoint situation between the category of finite 
sup-semilattices and the category of  finite $F$-sup-semilattices, between the category of finite 
sup-semilattices and  the category of finite frames, and the category of finite frames and the dual of 
the category  of  finite $F$-sup-semilattices, respectively. 
\end{remark}


\section{Three examples}\label{approaches}


In this section we will analyze three examples to illustrate adjoint situations 
$(\eta, \varepsilon), (\varphi, \psi)$  and $(\nu, \mu)$.

\begin{example}\label{firstexample}
Let ${\mathbf H}=({\mathbf G},F)$ be an 
$F$-sup-semilattice, ${\mathbf G}=(\{0,a,b,c,1\},\bigvee)$ and $0<a,b,c< 1$. 
Operator $F$ is given by the prescription  $F(0)=0$, $F(a)=a$, $F(b)=c$, $F(c)=b$, $F(1)=1$. 
Let ${\mathbf L}=(\{0, 1\}, \bigvee\}$ be a sup-semilattice where $0< 1$. Both sup-semilattices 
are shown in the following figures:

\begin{center}
\begin{tabular}{l c r}
\begin{tikzpicture}[scale=0.68]
  \node (1_2) at (7,6) {$1$};
  \node (b2) at (9,4) {$c$};
  \node (a2) at (5,4) {$a$};
  \node (c2) at (7,4) {$b$};
  \node (0_2) at (7,2) {$0$};
  \node (10_2) at (7,1.5) {${\mathbf G}$};
  \draw (1_2) -- (b2) -- (0_2) -- (a2) -- (1_2) -- (c2) -- (0_2);
  
  \node (1_1) at (11,4) {$1$};
  \node (0_1) at (11,2) {$0$};
   \node (100_2) at (11,1.5) {${\mathbf L}$};
  \draw (1_1) -- (0_1);
  \end{tikzpicture}&\phantom{xxxxxx} &
  \begin{tikzpicture}  
  \node (11_2) at (14,6) {\begin{tabular}{l | lllll}
     & $0$ & $a$ & $b$ & $c$ & $1$  \\[0.01cm] \hline
     $$ &  &  &  &  &   \\[-0.351cm]
$f_1$ & 0 & 0 & 0 & 0 & 0  \\
$f_2$ & 0 & 0 & 0 & 1 & 1  \\
$f_3$ & 0 & 0 & 1 & 0 & 1  \\
$f_4$ & 0 & 1 & 0 & 0 & 1  \\
$f_5$ & 0 & 1 & 1 & 0 & 1  \\
$f_6$ & 0 & 1 & 0 & 1 & 1  \\
$f_7$ & 0 & 0 & 1 & 1 & 1  \\
$f_8$ & 0 & 1 & 1 & 1 & 1 
\end{tabular}};
 \end{tikzpicture}
  \end{tabular}
\end{center}
Let us define a frame ${{\mathbf J}{[{\mathbf H},{\mathbf L}]}}%
=({\mathbb S}({\mathbf G},{\mathbf L}), %
S_{[{\mathbf H},{\mathbf L}]})$ where $S_{[{\mathbf H},{\mathbf L}]}$ 
is a relation from Definition \ref{framefromfss}. Let us denote $S_{[{\mathbf H},{\mathbf L}]}$ 
as $\rho$. Clearly, ${\mathbb S}({\mathbf G},{\mathbf L})$ has 8 elements, which we will denote $f_i$, where $i\in \{1,2,3,4,5,6,7,8\}$ and their description is given by the preceding table. Moreover, 
$f_1\leq f_2\leq f_6, f_7\leq f_8$ and $f_1\leq f_3\leq f_5, f_7\leq f_8$ and  
$f_1\leq f_4\leq f_5, f_6\leq f_8$. 

Let us now describe the relation $\rho$ on ${\mathbb S}({\mathbf G},{\mathbf L})$. By definition,  

$$f_i\mathrel{\rho} f_j \iff \forall x\in G\ \  f_j(x)\leq f_i(F(x)).$$

Clearly $f_8\rho f_i$ and $f_i\rho f_1$ for any $i\in \{1,2,3,4,5,6,7,8\}$. Furthermore, it is easy to see that  $f_1\circ F=f_1$, $f_2\circ F=f_3$, $f_3\circ F=f_2$, $f_4\circ F=f_4$, $f_5\circ F=f_6$, $f_6\circ F=f_5$, $f_7\circ F=f_7$ and $f_8\circ F=f_8$. This means $f_2\rho f_3$, $f_3\rho f_2$, $f_4\rho f_4$, $f_5\rho f_6$, $f_6\rho f_5$, $f_7\rho f_7$. Hence also $f_5\rho f_2$, $f_5\rho f_4$, $f_6\rho f_3$, $f_6\rho f_4$, 
$f_7\rho f_2$, $f_7\rho f_3$. We obtain that 
$$
\begin{array}{r@{\,\,}c@{}l}
\rho&=\{&(f_8, f_1), (f_8, f_2), (f_8, f_3), (f_8, f_4), (f_8, f_5), (f_8, f_6),  %
 (f_7, f_1), (f_6, f_1),\\[0.1cm]
&\multicolumn{2}{@{}l}{(f_5, f_1), (f_4, f_1), (f_3, f_1), (f_2, f_1), (f_1, f_1), (f_2, f_3), (f_3, f_2), 
(f_5, f_6), (f_6, f_5),}\\[0.1cm]
&\multicolumn{2}{@{}l}{(f_5, f_2), (f_5, f_4), (f_6, f_3), (f_6, f_4), (f_7, f_2), (f_7, f_3), (f_8, f_7), %
(f_8, f_8), (f_7, f_7)\}.}%
\end{array}
$$

By Theorem \ref{natsupsem}, there exists a lax morphism 
$\mu _{\mathbf H}\colon{}{\mathbf H} \longrightarrow %
{\mathbf L}^{{\mathbf J}{[{\mathbf H},{\mathbf L}]}}$ of $F$-sup-semilattices 
defined for arbitrary $x\in G$ and $f_i\in {\mathbb S}({\mathbf G},{\mathbf L})$ by

$$(\mu _{\mathbf H}(x))(f_i)=f_i(x).$$

Let us now compute $\mu _H$ on elements of $G$. It holds that:

$$(\mu _{\mathbf H}(0))(f_i) = 0 \text{ and } (\mu _{\mathbf H}(1))(f_i) = 1 
\text{ for any } i\in \{1,2,3,4,5,6,7,8\}, $$

\[ 
\begin{array}{c@{}c@{}c}
(\mu _{\mathbf H}(a))(f_i) =
  \begin{cases}
    1       &\text{for }  i\in \{4,5,6,8\}\\
    0 &\text{otherwise}, 
  \end{cases}& &\ \,
  (\mu _{\mathbf H}(b))(f_i) =
  \begin{cases}
    1       &\text{for }  i\in \{3,5,7,8\}\\
    0 &\text{otherwise}, 
  \end{cases}

  \end{array}
\]
\[ (\mu _{\mathbf H}(c))(f_i) =
  \begin{cases}
    1       & \text{for }  i\in \{2,6,7,8\}\\
    0 & \text{otherwise}. 
  \end{cases}
\]

Evidently, $\mu_{\mathbf H}$ is injective. Recall that the ordering of elements of 
${\mathbf L}^{{\mathbf J}{[{\mathbf H},{\mathbf L}]}}$ is given by:

$$\alpha\leq\beta\iff %
\forall i\in \{1,2,3,4,5,6,7,8\}\ \alpha(f_i)\leq \beta(f_i).$$

Since $\mu _{\mathbf H}$ preserves arbitrary joins and it is injective we have immediately that 
$\mu _{\mathbf H}$  is an order embedding.

Let us now show that $\mu_{\mathbf H}$ is not only a lax morphism 
but a homomorphism of $F$-sup-semilattices as well. In other words, 
we have to verify that 

$$F^{{\mathbf J}{[{\mathbf H},{\mathbf L}]}}%
\circ \mu_{\mathbf H}= \mu_{\mathbf H}\circ F.$$ 

Since $\mu_{\mathbf H}$ is  a lax morphism we have that 

$$F^{{\mathbf J}{[{\mathbf H},{\mathbf L}]}}\circ %
\mu_{\mathbf H}\leq \mu_{\mathbf H}\circ F,$$

holds in general so let us prove the reverse inequality. Since $F$ is a bijective 
morphism of sup-semilattices, 
 for any $f_i$ there exists 
$f_{j(i)}$ such that $f_{j(i)}\circ F=f_i$ and hence $f_i\rho f_{j(i)}$. 
We then have 
$$
\begin{array}{r c l}
((\mu_{\mathbf H}\circ F)(x))(f_i)&=&\mu_{\mathbf H}(F(x))(f_i)=f_i(F(x))=f_{j(i)}(x)\\
&\leq& 
\bigvee\{f_j(x) \mid f_j\in {\mathbb S}({\mathbf G},{\mathbf L}),f_i \rho f_j\}\\
&=&\bigvee\{(\mu_{\mathbf H}(x))(f_j) \mid f_j\in {\mathbb S}(G,L), f_i \rho f_j\}\\
&=&(F^{{\mathbf J}{[{\mathbf H},{\mathbf L}]}}(\mu_{\mathbf H}(x)))(f_i)\\
&=&\left((F^{{\mathbf J}{[{\mathbf H},{\mathbf L}]}}\circ \mu_{\mathbf H})(x)\right)(f_i)
\end{array}$$

since $f_i(x)\in \{f_j(x) \mid f_j\in {\mathbb S}({\mathbf G},{\mathbf L}),f_i \rho f_j\}$.\\

We conclude that $\mu_{\mathbf H}$ is indeed a F-sup-semilattice homomorphism and therefore it is an element of $\mathcal{E}_\leq$ since $\mathcal{E}\subseteq\mathcal{E}_\leq$.

\end{example}

\begin{example}\label{secondexample}
Let ${\mathbf J}=(T,S)$ be a frame where 
$T=\{f_2,f_3,f_4\}$, $f_2,f_3,f_4$ are mappings from Example  \ref{firstexample} and $S$ is a restriction 
of the relation $\rho$ on $T$  from Example  \ref{firstexample}. Hence 
 $S$ is a relation on $\{f_2,f_3,f_4\}$ given by $f_2 \mathrel{S} f_3$, $f_3\mathrel{S} f_2$ and 
  $f_4\mathrel{S} f_4$. Let ${\mathbf L}=(\{0, 1\}, \bigvee\}$ be a sup-semilattice where $0< 1$.

By Theorem \ref{adjSFSL} there exists a unique homomorphism 
$\nu_{\mathbf J}\colon \mathbf J\to {\mathbf J}[{\mathbf L}^{\mathbf J}, \mathbf L]$ 
of frames  defined for 
arbitrary $x\in L^T$ and $f_i\in T$ in such a way that 
$$(\nu_{\mathbf J}(f_i))(x)=x({f_i}).$$ 

First of all, ${L^T}$ is obviously an 8-element set and we denote its elements as $\alpha _k$, $k\in \{1,2,3,4,5,6,7,8\}$ (see Table \ref{tab:label}). Let us now compute the induced tense operator ${\mathbf F^{\mathbf J}}$ on ${\mathbf L^{\mathbf J}}$. By its definition, ${\mathbf F^{\mathbf J}}$  is given by

$$({{\mathbf F^{\mathbf J}}}(\alpha _k))(f_i)=\bigvee \{\alpha _k(f_l)\mid f_i\mathrel{S} f_l\},$$

for $k\in \{1,2,3,4,5,6,7,8\}$ and $i\in \{2,3,4\}$. Since $f_2\mathrel{S} f_3$, 
$f_3\mathrel{S} f_2$ and $f_4\mathrel{S} f_4$, it is easy to see that 
$({\mathbf F^{\mathbf J}}(\alpha _k))(f_2)=\alpha _k(f_3)$, 
$({\mathbf F^{\mathbf J}}(\alpha _k))(f_3)=\alpha _k(f_2)$ and 
$({\mathbf F^{\mathbf J}}(\alpha _k))(f_4)=\alpha _k(f_4)$. Using this fact, we can evaluate for ${\mathbf F^{\mathbf J}}(\alpha _k)$ 
for all $k\in \{1,2,3,4,5,6,7,8\}$ and $i\in \{2,3,4\}$, which is shown in Table \ref{tab:label}.

\begin{table}[ht]
\centering
\begin{tabular}{c c c}
\begin{tabular}{c}
{\begin{tabular}{l | lll}
     & $f_2$ & $f_3$ & $f_4$   \\[0.01cm] \hline
     &  &  &   \\[-0.351cm]
$\alpha _1$ & 0 & 0 & 0  \\
$\alpha _2$ & 1 & 0 & 0  \\
$\alpha _3$ & 0 & 1 & 0  \\
$\alpha _4$ & 0 & 0 & 1  \\
$\alpha _5$ & 1 & 1 & 0  \\
$\alpha _6$ & 1 & 0 & 1  \\
$\alpha _7$ & 0 & 1 & 1  \\
$\alpha _8$ & 1 & 1 & 1 
\end{tabular}}\\[0.4cm]
\end{tabular}
& &
{\begin{tabular}{l | lll}
     & $f_2$ & $f_3$ & $f_4$   \\[0.01cm] \hline
     &  &  &   \\[-0.351cm]
${\mathbf F^J}(\alpha _1)=\alpha _1$ & 0 & 0 & 0  \\
${\mathbf F^J}(\alpha _2)=\alpha _3$ & 0 & 1 & 0  \\
${\mathbf F^J}(\alpha _3)=\alpha _2$ & 1 & 0 & 0  \\
${\mathbf F^J}(\alpha _4)=\alpha _4$ & 0 & 0 & 1  \\
${\mathbf F^J}(\alpha _5)=\alpha _5$ & 1 & 1 & 0  \\
${\mathbf F^J}(\alpha _6)=\alpha _7$ & 0 & 1 & 1  \\
${\mathbf F^J}(\alpha _7)=\alpha _6$ & 1 & 0 & 1  \\
${\mathbf F^J}(\alpha _8)=\alpha _8$ & 1 & 1 & 1 
\end{tabular}}\\[0.4cm]
& &\\
\end{tabular}\\
\caption{${\mathbf L}^{\mathbf J}$ and  the induced tense operator 
${\mathbf F^{\mathbf J}}$ on ${\mathbf L^{\mathbf J}}$}
\label{tab:label}
\end{table}
We will now describe the relation $S_{{\mathbf J}[{\mathbf L}^{\mathbf J}, \mathbf L]}$, which we denote as $\rho '$. Recall that by definition, for two sup-semilattice homomorphisms 
$\varphi,\psi\colon {\mathbf L^T}\to {\mathbf L}$ we have 
$$\varphi\mathrel{\rho'}\psi\iff(\forall k\in \{1,2,3,4,5,6,7,8\})(\psi(\alpha _k)\leq \varphi({\mathbf F^J}(\alpha _k))).$$

For $\nu_{\mathbf J}\colon \mathbf J\to {\mathbf J}[{\mathbf L}^{\mathbf J}, \mathbf L]$ 
this by definition of $\nu_{\mathbf J}$ means that $\nu_{\mathbf J}(f_i)\rho '\nu_{\mathbf J}(f_j)$ if and only if 
$$\alpha _k(f_j)=(\nu_{\mathbf J}(f_j))(\alpha _k)\leq (\nu_{\mathbf J}(f_i))({\mathbf F^J}(\alpha _k))=({\mathbf F^J}(\alpha _k))(f_i),$$

for all $k\in \{1,2,3,4,5,6,7,8\}$. This allows us to easily describe whether the respective pairs 
$(\nu_{\mathbf J}(f_i),\nu_{\mathbf J}(f_k))$, $i,k \in \{2,3,4\}$ belong to $\rho '$. 
We see that $(\nu_{\mathbf J}(f_2),\nu_{\mathbf J}(f_2))$ and $(\nu_{\mathbf J}(f_3),\nu_{\mathbf J}(f_3))$ don't belong to $\rho '$, since $\alpha _2(f_2)=1>0=({\mathbf F^J}(\alpha _2))(f_2)$ and $\alpha _3(f_3)=1>0=({\mathbf F^J}(\alpha _3))(f_3)$ respectively. Similarly, we can see that $(\nu_{\mathbf J}(f_3),\nu_{\mathbf J}(f_4))$ 
doesn't belong to $\rho '$, since $\alpha _3(f_3)=1>0=({\mathbf F^J}(\alpha _3))(f_3)$ and it can be 
analogically shown that $(\nu_{\mathbf J}(f_4),\nu_{\mathbf J}(f_3))$ doesn't belong to $\rho '$ either. 

Moreover, since  $({\mathbf F^J}(\alpha _k))(f_2)=\alpha _k(f_3)$, $({\mathbf F^J}(\alpha _k))(f_3)=\alpha _k(f_2)$ and $({\mathbf F^J}(\alpha _k))(f_4)=\alpha _k(f_4)$ for all $k\in \{1,2,3,4,5,6,7,8\}$, we get 
$(\nu_{\mathbf J}(f_2),\nu_{\mathbf J}(f_3))\in \rho ', (\nu_{\mathbf J}(f_3), \nu_{\mathbf J}(f_2))\in \rho '$ and 
$(\nu_{\mathbf J}(f_4),\nu_{\mathbf J}(f_4))\in \rho '$. In conclusion, this means $f_i \mathrel{S} f_k$ 
in our original frame ${\mathbf J}$ if and only if $\nu_{\mathbf J}(f_i)\mathrel{\rho '}\nu_{\mathbf J}(f_k)$ in the frame ${\mathbf J}[{\mathbf L}^{\mathbf J}, \mathbf L]$.
\end{example}

\begin{example}\label{thirdexample}
Let ${\mathbf H}=({\mathbf G},F)$ be the $F$-sup-semilattice from Example \ref{firstexample} where ${\mathbf G}=(\{0,a,b,c,1\},\bigvee)$ and $0<a,b,c< 1$. Operator $F$ is given by the prescription  $F(0)=0$, $F(a)=a$, $F(b)=c$, $F(c)=b$, $F(1)=1$. Let  ${\mathbf J}=(T,S)$ be the  frame   from Example  \ref{secondexample} where 
$T=\{f_2,f_3,f_4\}$ and  $S$ is a relation on $\{f_2,f_3,f_4\}$ given by 
$f_2 \mathrel{S} f_3$, $f_3\mathrel{S} f_2$ and   $f_4\mathrel{S} f_4$.

From Theorem \ref{adjSFSL} we obtain a lax morphism 
$\eta _{\mathbf H}\colon \mathbf H\to (\mathbf J\otimes \mathbf H)^{\mathbf J}$ 
of $F$-sup-semilattices defined in such a way that 
$$(\eta _{\mathbf H}(x))(i)=\mbox{\rm n}(j[\mathbf J,\mathbf H])(x_{i=}).$$ 

Let us first construct $\mathbf J\otimes \mathbf H$. To do that, we will need 
$$
[\mathbf J, \mathbf H]=\{(x_{iS}\vee F(x)_{i=}, F(x)_{i=})\mid x\in G, i\in T\}.
$$
By definition, $x_{f_iS}$, $F(x)_{f_i=}$ and $x_{f_iS}\lor F(x)_{f_i=}$ for 
an arbitrary $x\in {\mathbf G}$ are given by the following tables:

\noindent{}%
\begin{tabular}{@{}c@{\,\,\,}c@{}c@{\,\,\,}c@{}c}
{\begin{tabular}{l | lll}
     & $f_2$ & $f_3$ & $f_4$   \\[0.01cm] \hline
     $$ &  &  &       \\[-0.351cm]
$0_{f_2S}$ & $0$ & $0$ & $0$  \\
$0_{f_3S}$ & $0$ & $0$ & $0$  \\
$0_{f_4S}$ & $0$ & $0$ & $0$  \\
$a_{f_2S}$ & $0$ & $a$ & $0$  \\
$a_{f_3S}$ & $a$ & $0$ & $0$  \\
$a_{f_4S}$ & $0$ & $0$ & $a$  \\
$b_{f_2S}$ & $0$ & $b$ & $0$  \\
$b_{f_3S}$ & $b$ & $0$ & $0$  \\
$b_{f_4S}$ & $0$ & $0$ & $b$  \\
$c_{f_2S}$ & $0$ & $c$ & $0$  \\
$c_{f_3S}$ & $c$ & $0$ & $0$  \\
$c_{f_4S}$ & $0$ & $0$ & $c$  \\
$1_{f_2S}$ & $0$ & $1$ & $0$  \\
$1_{f_3S}$ & $1$ & $0$ & $0$  \\
$1_{f_4S}$ & $0$ & $0$ & $1$  \\
\end{tabular}}& &%
{\begin{tabular}{l | lll}
     & $f_2$ & $f_3$ & $f_4$   \\[0.01cm] \hline
     $$ &  &  &     \\[-0.351cm]
$F(0)_{f_2=}$ & $0$ & $0$ & $0$  \\
$F(0)_{f_3=}$ & $0$ & $0$ & $0$  \\
$F(0)_{f_4=}$ & $0$ & $0$ & $0$  \\
$F(a)_{f_2=}$ & $a$ & $0$ & $0$  \\
$F(a)_{f_3=}$ & $0$ & $a$ & $0$  \\
$F(a)_{f_4=}$ & $0$ & $0$ & $a$  \\
$F(b)_{f_2=}$ & $c$ & $0$ & $0$  \\
$F(b)_{f_3=}$ & $0$ & $c$ & $0$  \\
$F(b)_{f_4=}$ & $0$ & $0$ & $c$  \\
$F(c)_{f_2=}$ & $b$ & $0$ & $0$  \\
$F(c)_{f_3=}$ & $0$ & $b$ & $0$  \\
$F(c)_{f_4=}$ & $0$ & $0$ & $b$  \\
$F(1)_{f_2=}$ & $1$ & $0$ & $0$  \\
$F(1)_{f_3=}$ & $0$ & $1$ & $0$  \\
$F(1)_{f_4=}$ & $0$ & $0$ & $1$  \\
\end{tabular}}& &
{\begin{tabular}{l | lll}
     & $f_2$ & $f_3$ & $f_4$   \\[0.01cm] \hline
     $$ &  &  &  \\[-0.351cm]
$0_{f_2S}\lor F(0)_{f_2=}$ & $0$ & $0$ & $0$  \\
$0_{f_3S}\lor F(0)_{f_3=}$ & $0$ & $0$ & $0$  \\
$0_{f_4S}\lor F(0)_{f_4=}$ & $0$ & $0$ & $0$  \\
$a_{f_2S}\lor F(a)_{f_2=}$ & $a$ & $a$ & $0$  \\
$a_{f_3S}\lor F(a)_{f_3=}$ & $a$ & $a$ & $0$  \\
$a_{f_4S}\lor F(a)_{f_4=}$ & $0$ & $0$ & $a$  \\
$b_{f_2S}\lor F(b)_{f_2=}$ & $c$ & $b$ & $0$  \\
$b_{f_3S}\lor F(b)_{f_3=}$ & $b$ & $c$ & $0$  \\
$b_{f_4S}\lor F(b)_{f_4=}$ & $0$ & $0$ & $1$  \\
$c_{f_2S}\lor F(c)_{f_2=}$ & $b$ & $c$ & $0$  \\
$c_{f_3S}\lor F(c)_{f_3=}$ & $c$ & $b$ & $0$  \\
$c_{f_4S}\lor F(c)_{f_4=}$ & $0$ & $0$ & $1$  \\
$1_{f_2S}\lor F(1)_{f_2=}$ & $1$ & $1$ & $0$  \\
$1_{f_3S}\lor F(1)_{f_3=}$ & $1$ & $1$ & $0$  \\
$1_{f_4S}\lor F(1)_{f_4=}$ & $0$ & $0$ & $1$  \\
\end{tabular}}

\end{tabular}

One can easily obtain that $\mathbf J\otimes \mathbf H$ has 15 elements and 
$(\mathbf J\otimes \mathbf H)^{\mathbf J}$ has $15^{3}$ elements. 

Also, we can compute $\eta _{\mathbf H}$ as follows:

\begin{center}
{\begin{tabular}{l | lll}
		& $f_2$ & $f_3$ & $f_4$   \\[0.01cm] \hline
		$$ &  &  &    \\[-0.351cm]
		$\eta _{\mathbf H}(0)$ & $[(0,0,0)]$ & $[(0,0,0)]$ & $[(0,0,0)]$  \\[0.051cm]
		$\eta _{\mathbf H}(a)$ & $[(a,a,0)]$ & $[(a,a,0)]$ & $[(0,0,a)]$  \\[0.051cm]
		$\eta _{\mathbf H}(b)$ & $[(c,b,0)]$ & $[(b,c,0)]$ & $[(0,0,1)]$  \\[0.051cm]
		$\eta _{\mathbf H}(c)$ & $[(b,c,0)]$ & $[(c,b,0)]$ & $[(0,0,1)]$  \\[0.051cm]
		$\eta _{\mathbf H}(1)$ & $[(1,1,0)]$ & $[(1,1,0)]$ & $[(0,0,1)]$  \\
\end{tabular}}
\end{center}
This notation means that when $(\eta _{\mathbf H}(x))(f_i)\in \mathbf J\otimes \mathbf H$ is mapped to $[(\alpha,\beta,\gamma)]$ then $(\eta _{\mathbf H}(x))(f_i)$ is a class of maps $\mathbf J\to \mathbf H$ represented by the map of the form $f_2\mapsto \alpha$, $f_3\mapsto \beta$, $f_4\mapsto \gamma$.

Now we will show that the lax morphism $\eta _{\mathbf H}$ is a homomorphism of $F$-sup-semilattices. To do that, we will have to check that $(\eta _{\mathbf H}(F(x)))$ and $F^\mathbf{J}(\eta _{\mathbf H}(x))$, where $F^\mathbf{J}$ is the induced tense operator $F^\mathbf{J}$ on $(\mathbf J\otimes \mathbf H)^{\mathbf J}$, is the same map for every $x\in G$.

By the definition of $F$ it is clear that $\eta _{\mathbf H}(F(x))$ where $x\in G$ is given by the following table:

\begin{center}
{\begin{tabular}{l | lll}
		& $f_2$ & $f_3$ & $f_4$   \\[0.01cm] \hline
		$$ &  &  &   \\[-0.351cm]
		$\eta _{\mathbf H}(F(0))$ & $[(0,0,0)]$ & $[(0,0,0)]$ & $[(0,0,0)]$  \\[0.051cm]
		$\eta _{\mathbf H}(F(a))$ & $[(a,a,0)]$ & $[(a,a,0)]$ & $[(0,0,a)]$  \\[0.051cm]
		$\eta _{\mathbf H}(F(b))$ & $[(b,c,0)]$ & $[(c,b,0)]$ & $[(0,0,1)]$  \\[0.051cm]
		$\eta _{\mathbf H}(F(c))$ & $[(c,b,0)]$ & $[(b,c,0)]$ & $[(0,0,1)]$  \\[0.051cm]
		$\eta _{\mathbf H}(F(1))$ & $[(1,1,0)]$ & $[(1,1,0)]$ & $[(0,0,1)]$  \\
\end{tabular}}
\end{center}

Let us now describe the induced tense operator $F^\mathbf{J}$. Let $\varphi \in (\mathbf J\otimes \mathbf H)^{\mathbf J}$ be an arbitrary map. By definition of relation $\rho$, it is easy to see that for $F^\mathbf{J}$ the following holds

\[ (F^\mathbf{J}(\varphi))(f_i) =
\begin{cases}
	\varphi (f_3)       & \quad \text{if }  i=2,\\
	\varphi (f_2)       & \quad \text{if }  i=3,\\
	\varphi (f_4)       & \quad \text{if }  i=4.
\end{cases}
\]

This allows us to compute $F^\mathbf{J}(\eta _{\mathbf H}(x))$ where $x\in G$ and compare with the previous table. The respective values are given by the following table:

\begin{center}
{\begin{tabular}{l | lll}
		& $f_2$ & $f_3$ & $f_4$   \\[0.01cm] \hline
		$$ &  &  &    \\[-0.351cm]
		$F^\mathbf{J}(\eta _{\mathbf H}(0))$ & $[(0,0,0)]$ & $[(0,0,0)]$ & $[(0,0,0)]$  \\
		$F^\mathbf{J}(\eta _{\mathbf H}(a))$ & $[(a,a,0)]$ & $[(a,a,0)]$ & $[(0,0,a)]$  \\
		$F^\mathbf{J}(\eta _{\mathbf H}(b))$ & $[(b,c,0)]$ & $[(c,b,0)]$ & $[(0,0,1)]$  \\
		$F^\mathbf{J}(\eta _{\mathbf H}(c))$ & $[(c,b,0)]$ & $[(b,c,0)]$ & $[(0,0,1)]$  \\
		$F^\mathbf{J}(\eta _{\mathbf H}(1))$ & $[(1,1,0)]$ & $[(1,1,0)]$ & $[(0,0,1)]$  \\
\end{tabular}}
\end{center}

We conclude that  $\eta _{\mathbf H}$ is a homomorphism of $F$-sup-semilattices. 
Since $\eta _{\mathbf H}$ is evidently injective we obtain as in Example \ref{firstexample} 
that  $\eta _{\mathbf H}$ is an element of $\mathcal{E}_\leq$.
\end{example}

\section{Conclusions}\label{Conclusions}

In this paper, we have presented three basic construction methods to construct 
\begin{enumerate}[(i)]
	\item an $F$-sup-semilattice from a sup-semilattice and a relation, 
	\item a sup-semilattice from an $F$-sup-semilattice and a relation, and
	\item a relation from an $F$-sup-semilattice  and  a sup-semilattice,
	\end{enumerate}
and obtained three induced adjoint situations betweee the respective categories. 
 This result gives us an unifying view on recent results about representations of tense operators in different categories of posets and lattices.
 
 For future work, there are a number of open problems that we plan to address.
In particular we can mention the following ones: 

\begin{enumerate}
	\item As we have seen in Section \ref{approaches} the lax morphisms 
	$\mu_{\mathbf H}$ from Example \ref{firstexample} and 
	$\eta_{\mathbf H}$ from Example 
	\ref{thirdexample}  were both homomorphisms of $F$-sup-semilattices. We will look 
	for necessary and sufficient conditions to ensure this also in a general case.
	\item We intend to represent or approximate 
	any $\pwo${} $\mathcal A$ in the following manner for a 
	fixed sup-semilattice ${\mathbf L}$. The choice of ${\mathbf L}$ will of course depend on what behaviour we will be looking at (one could choose ${\mathbf L}$ to be a Boolean algebra or $MV$-algebra or simply a finite chain). We plan to construct a suitable completion 
	${\mathbf H}_{\mathcal A}$ of $\mathcal A$ and using (iii) we obtain a new frame 
	${\mathbf J}_{\mathcal A}$. The composition of $\mathcal A\to {\mathbf J}_{\mathcal A}$ 
	and $\eta _{{\mathbf H}_{\mathcal A}}$ will give us the desired representation or approximation. 
	\item  We intend to add more structure to our starting category of  sup-semila\-ttices. 
	More precisely, we plan to substitute this category  by a category of sup-algebras of a given type and also the category of  $F$-sup-semilattices 
	by a category of $F$-sup-algebras of the same type, and try to answer the same questions as above. 
\end{enumerate}





\end{document}